\def\R{\mathbb R}
\def\H{\mathbb H}
\def \h{\bf H}
\def\C{\mathbb C}
\def\s{\mathbb S}
\def\S{\rm SL(2,  \mathbb H)}
\def \s{\mathbb S}
\newtheorem{theorem}{Theorem}[section]
\newtheorem{lemma}[theorem]{Lemma}
\theoremstyle{definition}
\theoremstyle{remark}
\newtheorem{remark}[theorem]{Remark}
\numberwithin{equation}{section}
\theoremstyle{plain}
\newtheorem{cor}[theorem]{Corollary}
\newcommand{\secref}[1]{Section~\ref{#1}}
\newcommand{\thmref}[1]{Theorem~\ref{#1}}
\newcommand{\corref}[1]{Corollary~\ref{#1}}
\newcommand{\eqnref}[1]{~{\textrm(\ref{#1})}}
\begin{document}
\title[ Quaternionic J\o{}rgensen Inequality]{ Extremality of Quaternionic J\o{}rgensen Inequality}
\author[Krishnendu Gongopadhyay \and Abhishek Mukherjee]{Krishnendu Gongopadhyay \and Abhishek Mukherjee}
 \address{Department of Mathematical Sciences, Indian Institute of Science Education and Research (IISER) Mohali,
Knowledge City, Sector 81, S.A.S. Nagar, P.O. Manauli 140306, India}
\email{krishnendu@iisermohali.ac.in, krishnendug@gmail.com}
\address{ Kalna College, Kalna, Dist. Burdwan, West Bengal 713409, India}
\email{abhimukherjee.math10@gmail.com }

\subjclass[2000]{Primary 20H10; Secondary 51M10, 20H25 }
\keywords{quaternionic matrices, J\o{}rgensen inequality, hyperbolic $5$-space.}

\thanks{ }

\date{\today}

\begin{abstract}
Let $\S$  be the group of $2 \times 2$ quaternionic matrices with Dieudonn\'e determinant $1$.  The group $\S$ acts on the five dimensional hyperbolic space by isometries.  We ask extremality of J\o{}rgensen type inequalities in $\S$. Along the way, we derive  J\o{}rgensen type inequalities for quaternionic M\"obius transformations which extend earlier inequalities obtained by Waterman and Kellerhals. 
\end{abstract}
\maketitle

\section{Introduction}
 In the theory of Fuchsian groups, one of the important old problem is the ``\hbox{discreteness} problem": given two elements in ${\rm PSL}(2, \R)$, whether or not the group generated by them is discrete. For an elaborate account of this problem, see Gilman \cite{gilman}. \hbox{Algorithmic} solutions to this problem were given by Rosenberger \cite{r}, Gilman and Maskit \cite{gm}, Gilman \cite{gilman}.  The J\o{}rgensen inequality \cite{j} is one of the major results related to this problem.  J\o{}rgensen \cite{j} obtained an inequality that the generators of a discrete, non-elementary, two-generator subgroup of ${\rm SL}(2, \C)$ necessarily satisfy. Wada \cite{w} used this inequality to provide an effective algorithm that helps the software OPTi to test discreteness of subgroups, as well as to draw deformation spaces of discrete groups. 

A two-generator discrete subgroup of isometries of the hyperbolic space is called \emph{extreme group} if it satisfies equality in the J\o{}rgensen inequality.  Investigation of extreme groups in ${\rm SL}(2, \C)$ was initiated by J\o{}rgensen and Kikka \cite{jk}. Following that, there have been many investigations to  classify the two-generator extreme groups in ${\rm SL}(2, \C)$, for eg. see \cite{gm, gr}. In a series of papers, Sato et. al.  \cite{sato0}--\cite{sato5} have investigated this problem in great detail and provided a conjectural list of the parabolic-type extreme groups. Callahan \cite{cal} has provided a counter example to that conjecture. Callahan has also classified all non-compact arithmetic extreme groups that was not in the list of Sato et. al.  The problem of classifying parabolic-type J\o{}rgensen groups in ${\rm SL}(2, \C)$ is still open. Recently, Vesnin and Masley \cite{vm} have investigated extremality of other J\o{}rgensen type inequalities  in ${\rm SL}(2, \C)$.

The problem of classifying extreme J\o{}rgensen groups in higher dimension has not seen much investigation till date. The aim of this paper is to address this problem for J\o{}rgensen type inequalities in $\S$, where $\H$ is the division ring of the real quaternions and  $\S$ is the group of $2 \times 2$ quaternionic matrices with Dieudonn\'e determinant $1$. It is well-known that $\S$ acts on the  five dimensional  real  hyperbolic space $\h^5$ by the M\"obius transformations (or linear fractional transformations), for a proof see \cite{kg}. The isometries of $\h^5$ are classified by their fixed points as elliptic, parabolic and hyperbolic (or loxodromic). This classification can be characterized algebraically by conjugacy invariants of the isometries, see \cite{p, ps, kg, cao} for more details.

 The J\o{}rgensen inequality has been generalized in higher dimensions by Martin \cite{martin} who formulated it by identifying the hyperbolic space as the upper half space or the unit ball in $\R^{n+1}$. Hence, in Martin's generalization,  the isometries are real matrices of rank $n+1$. Generalizing the approach of using rank two real and complex matrices in low dimensions,  Ahlfors \cite{ahlfors} used Clifford algebras to investigate higher dimensional M\"obius groups. In this approach, the  isometry group of the hyperbolic $n$-space can be identified with a group of $2 \times 2$ matrices over the Clifford numbers, see Ahlfors \cite{ahlfors}, Waterman \cite{waterman} for more details. Using the Clifford algebraic formalism, a generalization of J\o{}rgensen inequality was obtained by Waterman \cite{waterman}.  However, it may be difficult to deal with the Clifford matrices due to the complicated multiplicative structure of the Clifford numbers. 

Using the real quaternions there is an intermediate approach between the complex numbers and the Clifford numbers, that should provide the  closest generalization of the low dimensional results for four and five dimensional M\"obius groups.    The Clifford group that acts by isometries on the hyperbolic $4$-space, is a proper subgroup of $\S$. So, Waterman's result restricts to this case.  Kellerhals \cite{kel2} has used this quaternionic Clifford group to investigate collars in $\h^4$. Recently, Tan et. al. \cite{t} have obtained a generalization of the classical Delambre-Gauss formula for right-angles hexagons in hyperbolic $4$-space using the quaternionic Clifford group of Ahlfors and Waterman.

 The Clifford group that acts on $\h^5$, however, is not a subgroup of $\S$. In fact, the group $\S$ is not in the list of the Clifford groups of Ahlfors and Waterman. However, following the approaches of Waterman, it is not hard to formulte J\o{}rgensen type inequalities for pairs of isometries in $\S$. Kellerhals \cite{kel} derived J\o{}rgensen inequality for two-generator discrete subgroups in $\S$ where one the of the generators is either unipotent parabolic or hyperbolic.   

Using similar methods as that of Waterman, we give here slightly generalized versions of the J\o{}rgensen inequalities in $\S$  when one of the generators is either semisimple or fixes a point on the boundary, see \thmref{jss} and \thmref{jg} in \secref{jse}. As corollaries we derive the formulations by Kellerhals and Waterman in the quaternionic set up, see \corref{kele} and  \corref{wat} respectively.   We also formulate a  J\o{}rgensen type inequality for strictly hyperbolic elements that  is very close to the original formulation of J\o{}rgensen, see  \corref{jh}. We recall here that a strictly hyperbolic element or a stretch is conjugate to a diagonal matrix that has real diagonal entries different from $0, ~1$ or $-1$. We also give as corollaries two weaker versions of the inequality when one generator is semisimple. 

We investigate the extremality of these J\o{}rgensen inequalities in \secref{sext}. We extend the results of J\o{}rgensen and Kikka in the quaternionic set up, see \thmref{ext1}, \corref{extc1} and, \thmref{extt2}. We also obtain necessary conditions for a two-generator subgroup of $\S$ to be extremal, see \corref{extp1} and \corref{extc2}.

\section{Preliminaries}
\subsection{The Quaternions} Let $\H$ denote the division ring of quaternions. Recall that every element of $\H$ is of the form  $a_{0}+a_{1}i+a_{2}j+a_{3}k$,where $a_{0},a_{1},a_{2},a_{3}\in \R$, and  $ i,j,k$ satisfy relations: $i^{2}=j^{2}=k^{2}=-1,ij=-ji=k,jk=-kj=i,ki=-ik=j$, and $ ijk=-1$. Any $a\in {\H}$ can be written as  $a=a_{0}+a_{1}i+a_{2}j+a_{3}k=(a_{0}+a_{1}i)+(a_{2}+a_{3}i)j=z+wj$, where $z=a_{0}+a_{1}i,~ w=a_{2}+a_{3}i\in\bf{\C}$. For $a\in\bf{\H}$,with $a=a_{0}+a_{1}i+a_{2}j+a_{3}k$,we define $\Re(a)=a_{0}$=the real part of $a$ and $\Im(a)=a_{1}i+a_{2}j+a_{3}k=$ the imaginary part of $a$. Also,define the conjugate of $a$ as $\overline {a}= \Re(a)-\Im(a)$
If $\Re(a)=0$,then we call $a$ as a vector in $\H$ which we can identify with ${\R}^{3}$. The norm of $a$ is $|a|=\sqrt{a_0^{2}+a_1^{2}+a_2^{2}+a_3^{2}}$.
\subsubsection{{Useful Properties}}
We note the following properties of the quaternions that will help us further:
\begin{enumerate}
\item  {For $x\in{\bf{\R}},~ a\in{\H},\text{ we have}\medspace ax=xa $.}
\item {For $ a\in{\bf{\C}},~ aj=j\overline{a}$.}
\item {For $a,b\in{\H},|ab|=|a||b|=|ba|\thickspace \text{and if}\thickspace a\neq 0,\text{then}\thickspace a^{-1}=\frac{\overline{a}}{|a|^2}$.}
\end{enumerate}

Two quaternions ${a,b}$ are said to be \emph{similar} if there exists a non-zero quaternion $ {c}$ such that $ {b=c^{-1}ac}$ and we write it as $ {a\backsim b}$. Obviously $ {'\backsim'}$  is an equivalence relation on ${\H}$ and denote $[a]$ as the class of $a$. It is easy to verify that $ {a \backsim b}$ if and only if $ {\Re(a)=\Re(b)}$ and $|a|=|b|$. Equivalently, $ {a \backsim b}$ if and only if $ {\Re(a)=\Re(b)}$ and $|\Im (a)|=|\Im (b)|$. Thus the similarity class of every quaternion $a$ contains a pair of complex conjugates with absolute-value $|a|$ and real part equal to $\Re( a)$.  Let $a$ is similar to $re^{i \theta}$, $\theta \in [-\pi, \pi]$. In most cases, we will adopt the convention of calling $|\theta|$ as the \emph{argument} of $a$ and will denote it by $\arg(a)$. According to this convention, $\arg( a )\in [0, \pi]$, unless specified otherwise.

\medskip Suppose a quaternion $q$ is conjugate to a complex number $z=re^{ i \alpha}$. Since $\Re(q)=\Re(z)$ and $|q|=|z|$, it follows that $|\Im q|=|\Im z|=|r\sin \alpha|$, i.e. $|\sin \alpha|=\frac{|\Im q|}{|q|}$.

\subsection{Matrices over the quaternions}
Let  $ {\rm M{(2, \H)}}$ denotes the set of all $2\times2$ matrices over the quaternions. If $A=\begin{pmatrix}a&b\\c&d\end{pmatrix}$, then we can associate the `quaternionic determinant'
$\det(A)=|ad-aca^{-1}b|$.  A matrix $A\in{\rm M{(2, \H)}}$ is invertible if and only if $\det(A)\neq0$.  Also, note that for  $A,B\in {\rm M{(2, \H)}}, ~ \det(AB)=\det(A)\det(B)$.
Now set
$$\S=\bigg\{\begin{pmatrix}a&b\\c&d\end{pmatrix}\in {\rm  M}_2(\H):\det{\begin{pmatrix}a&b\\c&d\end{pmatrix}}       =|ad-aca^{-1}b|=1\bigg \}.$$
The group $\S$ acts as the orientation-preserving isometry group of the hyperbolic $5$-space $\h^5$. We identify the extended quaternionic plane $\hat \H=\H \cup \infty$ with the conformal boundary $\s^4$ of the hyperbolic $5$-space. The group $\S$ acts on $\hat \H$ by M\"obius transformations:
$$\begin{pmatrix}a&b\\c&d\end{pmatrix}: Z \mapsto (aZ+b)(cZ+d)^{-1}.$$
The action is extended over $\h^5$ by Poincar\'e extensions.

\subsection{Classification of elements of $\S$}   Every element $A$ of $\S$ has a fixed point on the closure of the hyperbolic space ${\overline \h}^5$ and this gives us the usual classification of elliptic, parabolic and hyperbolic (or loxodromic) elements in $\S$. Further, it follows from Lefschetz fixed point theorem that every element of $\S$ has a fixed point in conformal boundary. Up to conjugacy, we can take that fixed point to be $\infty$ and hence every element in $\S$ is conjugate to an upper-triangular matrix.

We would like to note here that an elliptic or hyperbolic  element $A$  is conjugate to a matrix of the form $$\begin{pmatrix} \lambda & 0 \\ 0 & \mu \end{pmatrix}$$
where $\lambda, \mu \in \C$. If $|\lambda|=|\mu|(=1)$ then $A$ is elliptic. Otherwise it is hyperbolic. In the hyperbolic case $|\lambda| \neq 1 \neq |\mu|$ and $|\lambda||\mu|=1$.   A hyperbolic or loxodromic element will be called \emph{strictly hyperbolic} if it is conjugate to a real diagonal (non-identity) matrix.
A parabolic isometry is conjugate to an element of the form
$$\begin{pmatrix} \lambda & 1 \\ 0 & \lambda \end{pmatrix}, ~ |\lambda|=1.$$
For more details of the classification and algebraic criteria to detect them see \cite{cao, kg, p, ps}.
\subsection{Conjugacy invariants}
According to Foreman \cite{foreman} the following three functions are conjugacy invariants of $\S$:  For $A=\begin{pmatrix}a&b\\c&d\end{pmatrix}\in\S$,
\begin{eqnarray*}
\beta={\beta}_A&=&|d|^{2}\Re(a)+|a|^{2}\Re(d)-\Re(\overline{a}bc)-\Re(bc\overline{d})\\&=&\Re[(ad-bc)\overline{a}+(da-cb)\overline{d}],\\
\gamma={\gamma}_A&=&|a|^{2}+|d|^{2}+4\Re(a)\Re(d)-2\Re(bc)\\&=&|a|^{2}+|d|^{2}+2[\Re(a\overline{d})+\Re(ad)]-2\Re(bc)\\&=&|a+d|^{2}+2\Re(ad-bc),\\
\delta={\delta}_A&=&\Re(a)+\Re(d)=\Re(a+d)
\end{eqnarray*}
Parker and Short \cite{ps} defined another two quantities for each $A\in \S$ as follows:
\begin{eqnarray*}
\sigma={\sigma}_A
&=&cac^{-1}d-cb,when \; c\neq 0,\\
&=&bdb^{-1}a,when \; c=0,b\neq 0,\\
&=&(d-a)a(d-a)^{-1}d,when \; b=c=0,a\neq d,\\
&=&a\overline{a},when \; b=c=0,a=d\\
\tau={\tau}_A&=&cac^{-1}+d,when \; c\neq 0\\
&=&bdb^{-1}+a,when \; c=0,b\neq 0\\
&=&(d-a)a(d-a)^{-1}+d,when \; b=c=0,a\neq d\\
&=&a+\overline{a},when \; b=c=0,a=d
\end{eqnarray*}
It can be proved that in each case $|\sigma|^{2}=\alpha=1$,where
$$\alpha={\alpha}_A=|a|^2|d|^2+|b|^2|c|^2-2\Re(a\overline{c}d\overline{b}).$$ We are going to show that $\sqrt{\alpha}=det(A)=|ad-aca^{-1}b|=|\sigma|$.
\begin{lemma}If $A=\begin{pmatrix}a&b\\c&d\end{pmatrix}\in{\rm M{(2, \H)}}$, then  $\sqrt{\alpha}=det(A)=|ad-aca^{-1}b|=|\sigma|$.
\end{lemma}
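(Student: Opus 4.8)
The plan is to show that all three expressions coincide with $\sqrt{\alpha}$ by a direct quaternionic computation, using only the elementary identities $|q|^2=q\overline q$, the multiplicativity $|pq|=|p||q|$ of property~(3), the inverse formula $q^{-1}=\overline q/|q|^2$, the fact that real scalars are central (property~(1)), and the cyclicity of the real part, $\Re(pq)=\Re(qp)$, which holds for all quaternions and hence gives $\Re(pqr)=\Re(qrp)$ upon grouping. I would prove $\det(A)=\sqrt\alpha$ and $|\sigma|=\sqrt\alpha$ separately and then concatenate the equalities.

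First I would handle $\det(A)=\sqrt\alpha$ under the standing assumption $a\neq 0$. Writing $ad-aca^{-1}b=a(d-ca^{-1}b)$ and applying multiplicativity gives $\det(A)^2=|a|^2\,|d-ca^{-1}b|^2$. Expanding $|d-ca^{-1}b|^2=(d-ca^{-1}b)\overline{(d-ca^{-1}b)}$ produces $|d|^2-2\Re\!\big(d\,\overline{ca^{-1}b}\big)+|ca^{-1}b|^2$, and the last term equals $|c|^2|b|^2/|a|^2$. The only step that needs care is the cross term: since $\overline{ca^{-1}b}=\overline b\,(a/|a|^2)\,\overline c$, multiplying through by $|a|^2$ turns it into $\Re(d\overline b\,a\overline c)$, which by cyclicity of $\Re$ equals $\Re(a\overline c\,d\overline b)$. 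Collecting everything yields $\det(A)^2=|a|^2|d|^2+|b|^2|c|^2-2\Re(a\overline c\,d\overline b)=\alpha$.

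Next I would verify $|\sigma|^2=\alpha$ in each of the four cases of the definition. When $c\neq 0$, factor $\sigma=c(ac^{-1}d-b)$, so $|\sigma|^2=|c|^2\,|ac^{-1}d-b|^2$; the computation mirrors the determinant one, with the factor $|c|^2$ absorbing the inverse via $|c|^2 c^{-1}=\overline c$ and producing $\Re(a\overline c\,d\overline b)$ directly (here no cycling is needed, since the factors already appear in that order). The remaining three cases are immediate telescopings of the norm: when $c=0,\,b\neq0$ one gets $|bdb^{-1}a|=|a||d|$; when $b=c=0,\,a\neq d$ the conjugation by $(d-a)$ is norm-preserving, so $|\sigma|=|a||d|$; and when $b=c=0,\,a=d$ we get $|\sigma|=|a\overline a|=|a|^2$. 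In each of these cases $\alpha$ collapses to $|a|^2|d|^2$ (respectively $|a|^4$), matching $|\sigma|^2$.

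The main subtlety, rather than the routine algebra above, is the degenerate case $a=0$, where the written expression $ad-aca^{-1}b$ is not literally defined because $a^{-1}$ is absent. Here I would fall back on the intrinsic Dieudonn\'e determinant, for which $\det\begin{pmatrix}0&b\\c&d\end{pmatrix}=|bc|$, and check directly that this matches $\sqrt\alpha=\sqrt{|b|^2|c|^2}=|b||c|$; alternatively one can argue by continuity, perturbing $a$ away from $0$ and passing to the limit, since both $\det$ and $\alpha$ are continuous in the entries. This is the one place where the piecewise nature of the definitions of $\det$ and of $\sigma$ must be reconciled, and it deserves an explicit remark so that the identity $\sqrt\alpha=\det(A)=|\sigma|$ holds on all of $\mathrm{M}(2,\H)$ and not merely on the open set $a\neq 0$.
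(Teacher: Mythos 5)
Your proposal is correct, and its core computation for $\det(A)^2=\alpha$ is essentially the one in the paper: both expand the squared norm of $ad-aca^{-1}b$ via quaternionic conjugation and reduce the cross term to $2\Re(a\overline c\,d\overline b)$ using $a^{-1}=\overline a/|a|^2$ and cyclicity of $\Re$; your factoring $ad-aca^{-1}b=a(d-ca^{-1}b)$ before expanding is only a cosmetic reorganization. Where you go beyond the paper is in actually proving the remaining equalities of the statement: the paper's proof stops at $\det(A)^2=\alpha$ and relies on the earlier unproved assertion that $|\sigma|^2=\alpha$ in each of the four cases of the Parker--Short definition, whereas you verify all four cases (the $c\neq 0$ case by the parallel factorization $\sigma=c(ac^{-1}d-b)$, the others by telescoping norms), and you also flag and resolve the degeneracy at $a=0$, where the written formula for $\det$ involves $a^{-1}$ and the paper is silent. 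These additions cost little and make the lemma hold on all of ${\rm M}(2,\H)$ as stated; the only mild caution is that your fallback value $\det\bigl(\begin{smallmatrix}0&b\\ c&d\end{smallmatrix}\bigr)=|bc|$ should be justified either by the continuity argument you mention or by citing the intrinsic definition of the Dieudonn\'e determinant, since the paper itself only ever defines $\det$ by the formula $|ad-aca^{-1}b|$.
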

\begin{proof}
We observe that\begin{eqnarray*} (det(A))^2&=&|ad-aca^{-1}b|^2=(ad-aca^{-1}b)\overline{(ad-aca^{-1}b)}\\
&=&(ad-aca^{-1}b)(\overline{d}\overline{a}-\overline{b}{\overline{a}}^{-1}\overline{c}\overline{a})\\
&=&|a|^2|d|^2+|b|^2|c|^2-ad\overline{b}{\overline{a}}^{-1}\overline{c}\overline{a}-aca^{-1}b\overline{d}\overline{a}\\
&=&|a|^2|d|^2+|b|^2|c|^2-2\Re(aca^{-1}b\overline{d}\overline{a})\\
&=&|a|^2|d|^2+|b|^2|c|^2-2\Re(c\overline{a}b\overline{d})=|a|^2|d|^2+|b|^2|c|^2-2\Re(a\overline{c}d\overline{b})=\alpha.
\end{eqnarray*}
This completes the proof.
\end{proof}
\subsection{Some Observations}
It can be checked that ${\alpha}={\alpha}_A=|l_{ij}|^2=|r_{ij}|^2, 1 \leq i,j \leq 2$, where $l_{ij}$, $r_{ij}$ are defined as follows:
\begin{align*}
l_{11}&=da-dbd^{-1}c & l_{12}&=bdb^{-1}a-bc \\
l_{21}&=cac^{-1}d-cb &  l_{22}&=ad-aca^{-1}b\\
r_{11}&=ad-bd^{-1}cd & r_{12}&=db^{-1}ab-cb\\
r_{21}&=ac^{-1}dc-bc & r_{22}&=da-ca^{-1}ba
\end{align*}
\begin{theorem} \cite{kel} Let $M=\begin{pmatrix}a&b\\c&d\end{pmatrix}\in{\rm M{(2, \H)}}$ be such that $det(M)\neq 0$.Then
$M$ is invertible
$$M^{-1}=\begin{pmatrix}{l_{11}}^{-1}d&-{l_{12}}^{-1}b\\-{l_{21}}^{-1}c&{l_{22}}^{-1}a\end{pmatrix}=\begin{pmatrix}d{r_{11}}^{-1}&-b{r_{12}}^{-1}\\-c{r_{21}}^{-1}&a{r_{22}}^{-1}\end{pmatrix}.$$
\end{theorem}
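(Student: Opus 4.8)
My plan is to treat this as a \emph{verification}: exhibit that the two displayed matrices are genuine inverses of $M$, taking care throughout that $\H$ is noncommutative, so that no product may be reordered.

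First I would check that the entries are well defined. By the observation recorded just before the theorem together with the previous Lemma, each $l_{ij}$ and $r_{ij}$ satisfies $|l_{ij}|^{2}=|r_{ij}|^{2}=\alpha=(\det M)^{2}\neq 0$, so every one of them is a nonzero quaternion and hence invertible; thus $l_{11}^{-1}d,\ -l_{12}^{-1}b,\dots$ all make sense. At the same stroke I would record the factorisations $l_{11}=d(a-bd^{-1}c)$, $l_{12}=b(db^{-1}a-c)$, $l_{21}=c(ac^{-1}d-b)$, $l_{22}=a(d-ca^{-1}b)$ (and the analogous right factorisations of the $r_{ij}$); comparing norms shows that each inner factor, for instance $a-bd^{-1}c$ and the Schur term $s:=d-ca^{-1}b$, is itself invertible.

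The conceptual engine is the block (Schur complement) inversion. Writing
\[
M=\begin{pmatrix}1&0\\ ca^{-1}&1\end{pmatrix}\begin{pmatrix}a&0\\ 0&s\end{pmatrix}\begin{pmatrix}1&a^{-1}b\\ 0&1\end{pmatrix},\qquad s=d-ca^{-1}b,
\]
and inverting each factor yields, after one multiplication, the genuine two-sided inverse $M^{-1}=\begin{pmatrix}a^{-1}+a^{-1}bs^{-1}ca^{-1}&-a^{-1}bs^{-1}\\ -s^{-1}ca^{-1}&s^{-1}\end{pmatrix}$. I would then use the factorisations above to rewrite these four entries: cancelling the common outer factor against its inverse turns $l_{22}^{-1}a$ into $s^{-1}$, turns $l_{21}^{-1}c$ into $s^{-1}ca^{-1}$ (using $ac^{-1}d-b=ac^{-1}s$), turns $l_{12}^{-1}b$ into $a^{-1}bs^{-1}$ (using $db^{-1}a-c=sb^{-1}a$), and turns $l_{11}^{-1}d$ into $(a-bd^{-1}c)^{-1}=a^{-1}+a^{-1}bs^{-1}ca^{-1}$. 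This reproduces exactly the first displayed matrix; repeating with the right factorisations of the $r_{ij}$ (equivalently, grouping the inverted factors on the opposite side) reproduces the second. Since both are merely rewritings of the same $M^{-1}$, the two displayed forms agree automatically.

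The only genuine difficulty is the noncommutative bookkeeping: factors can never be permuted, so each cancellation must be produced by inserting $x^{-1}x=1$ in precisely the right slot and by substituting the defining relation for $s$. If one prefers a direct check instead, one can multiply $N_{1}M$ and $MN_{2}$ out entrywise (where $N_{1},N_{2}$ denote the two matrices in the statement)—using $l_{ij}^{-1}=\overline{l_{ij}}/\alpha$ to handle the inverses—show that the diagonal entries collapse to $1$ while the off-diagonal ones cancel, and then invoke that over the division ring $\H$ a right $\H$-linear endomorphism of $\H^{2}$ which is injective (equivalently surjective) is bijective, so a one-sided inverse is automatically two-sided; uniqueness of the inverse then gives $N_{1}=N_{1}(MN_{2})=(N_{1}M)N_{2}=N_{2}=M^{-1}$. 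A minor caveat, already implicit in the very definitions of $l_{ij},r_{ij}$, is that this presupposes $a,b,c,d\neq 0$; the degenerate cases in which an entry vanishes are read off directly from the relation $MN=I$.
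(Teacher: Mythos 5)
Your argument is correct, but note that the paper offers no proof of this statement to compare against: it is quoted verbatim from Kellerhals \cite{kel}, so yours is a genuinely self-contained substitute. The route you take --- the noncommutative LDU/Schur-complement factorisation $M=\begin{pmatrix}1&0\\ ca^{-1}&1\end{pmatrix}\begin{pmatrix}a&0\\ 0&s\end{pmatrix}\begin{pmatrix}1&a^{-1}b\\ 0&1\end{pmatrix}$ with $s=d-ca^{-1}b$, followed by the identifications $l_{22}=as$, $l_{21}=c\,(ac^{-1}s)$, $l_{12}=b\,(sb^{-1}a)$, $l_{11}=d(a-bd^{-1}c)$ and the quaternionic Woodbury identity $(a-bd^{-1}c)^{-1}=a^{-1}+a^{-1}bs^{-1}ca^{-1}$ --- checks out entry by entry (I verified all four $l$-cancellations and the four $r$-cancellations; none of them reorders factors illegitimately), and it has the virtue of explaining \emph{why} the eight quantities $l_{ij},r_{ij}$ appear: they are exactly the entrywise ``outer factor times Schur factor'' products, which also makes the equality of the two displayed matrices automatic rather than a separate computation. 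Your use of $|l_{ij}|^2=|r_{ij}|^2=\alpha=(\det M)^2\neq 0$ to guarantee invertibility of each $l_{ij}$, $r_{ij}$ and of the inner factors is the right appeal to the preceding observations. The one caveat you flag is real and worth keeping: the definitions of $l_{ij}$ and $r_{ij}$ (and your factorisation, which divides by $a$) silently assume $a,b,c,d\neq 0$, and the degenerate cases have to be treated by the direct computation $MN=I$ with the relevant terms deleted; your fallback of verifying $N_1M=I=MN_2$ and invoking that a one-sided inverse over the division ring $\H$ is two-sided correctly covers this.
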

\subsection{ Notations}
For our convenience we use the following notations:
\begin{align*}
d\sptilde&=l_{11}^{-1}d, & c\sptilde&=l_{21}^{-1}c,& b\sptilde&=l_{12}^{-1}b,& a\sptilde&=l_{22}^{-1}a\\
d_{\sptilde}&=dr_{11}^{-1},& c_{\sptilde}&=cr_{21}^{-1},& b_{\sptilde}&=br_{12}^{-1},& a_{\sptilde}&=ar_{22}^{-1}
\end{align*}\\
Kellerhals has proved some interesting properties of these numbers given by following lemma:
\begin{lemma} \cite{kel}
Let $M=\begin{pmatrix}a&b\\c&d\end{pmatrix}\in {\rm M{(2, \H)}}$ be invertible.Then we have the following properties:
\begin{enumerate}
\item $ad_{\sptilde}-bc_{\sptilde}=1=da_{\sptilde}-cb_{\sptilde},~ {d\sptilde}a-{b\sptilde}c=1={a\sptilde}d-{c\sptilde}b$.
\item $a{d\sptilde}-b{c\sptilde}=1=d{a\sptilde}-c{b\sptilde}, ~{d_{\sptilde}}a-{b_{\sptilde}}c=1={a_{\sptilde}}d-{c_{\sptilde}}b$.
\item $a{b\sptilde}=b{a\sptilde},c{d\sptilde}=d{c\sptilde},~{a\sptilde}c={c\sptilde}a,{b\sptilde}d={d\sptilde}b$.
\item $ab_{\sptilde}=ba_{\sptilde},cd_{\sptilde}=dc_{\sptilde},~a_{\sptilde}c=c_{\sptilde}a,b_{\sptilde}d=
d_{\sptilde}b$.
\end{enumerate}
\end{lemma}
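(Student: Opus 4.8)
The plan is to recognize that the eight quantities named in the statement are nothing but the entries of $M^{-1}$, organized according to the two inversion formulas recorded in the preceding theorem. Indeed, translating that theorem into the notation just introduced, the two expressions for the inverse read
\[
M^{-1}=\begin{pmatrix} d\sptilde & -b\sptilde \\ -c\sptilde & a\sptilde \end{pmatrix}=\begin{pmatrix} d_{\sptilde} & -b_{\sptilde} \\ -c_{\sptilde} & a_{\sptilde}\end{pmatrix}.
\]
Since $\det(M)\neq 0$, the matrix $M$ has a genuine two-sided inverse, so both displayed matrices satisfy $MM^{-1}=M^{-1}M=I$. The whole lemma is then obtained by expanding these four matrix products and reading off the entries; no further computation is needed.

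Concretely, first I would expand $MM^{-1}=I$ using the left (tilde-superscript) form. The two diagonal entries give $a d\sptilde - b c\sptilde=1$ and $d a\sptilde - c b\sptilde=1$, which is the first half of (2), while the vanishing of the two off-diagonal entries gives $a b\sptilde=b a\sptilde$ and $c d\sptilde=d c\sptilde$. Next I would expand $M^{-1}M=I$ in the same form: the diagonal entries produce $d\sptilde a - b\sptilde c=1$ and $a\sptilde d - c\sptilde b=1$, which is the second half of (1), and the off-diagonal entries give $b\sptilde d=d\sptilde b$ and $a\sptilde c=c\sptilde a$. Together these four commutation relations are exactly (3). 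Repeating the two expansions with the right (subscript-tilde) form yields, from $MM^{-1}=I$, the first half of (1) together with $a b_{\sptilde}=b a_{\sptilde}$ and $c d_{\sptilde}=d c_{\sptilde}$, and from $M^{-1}M=I$ the second half of (2) together with $b_{\sptilde}d=d_{\sptilde}b$ and $a_{\sptilde}c=c_{\sptilde}a$; these last four relations are (4). This accounts for all sixteen identities.

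The only point that requires genuine care is the non-commutativity of $\H$: because the entries are quaternions, every product must be expanded preserving the left-to-right order, and one may not cancel or commute factors. This is precisely why the lemma splits into four groups rather than two---the identities in (1) and (2) record the diagonal conditions $(\text{entry})=1$ depending on whether $M$ is multiplied on the left or on the right, and those in (3) and (4) record the off-diagonal conditions $(\text{entry})=0$ for each of the two forms of $M^{-1}$. I do not expect any real obstacle here; the substance of the result lies entirely in the inversion formula already established, and what remains is bookkeeping that the chosen notation is designed to make transparent.
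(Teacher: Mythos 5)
Your argument is correct. Note, however, that the paper does not actually prove this lemma: it is imported verbatim from Kellerhals \cite{kel}, so there is no in-paper proof to compare against. Your derivation is the natural one and it does account for all sixteen identities: writing the preceding theorem's two formulas for the inverse as
\[
M^{-1}=\begin{pmatrix} d\sptilde & -b\sptilde \\ -c\sptilde & a\sptilde \end{pmatrix}=\begin{pmatrix} d_{\sptilde} & -b_{\sptilde} \\ -c_{\sptilde} & a_{\sptilde}\end{pmatrix},
\]
the diagonal entries of $MM^{-1}=I$ and $M^{-1}M=I$ in the two forms give exactly the eight ``$=1$'' identities of (1) and (2) (with the left/right placement of $a,b,c,d$ relative to the decorated entries distinguishing the four cases), and the vanishing of the off-diagonal entries gives the eight commutation identities of (3) and (4); I have checked that your assignment of which product yields which group is the right one. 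Your appeal to the two-sidedness of the inverse is legitimate, since ${\rm M}(2,\H)$ is a ring and a one-sided inverse of an invertible element is automatically the two-sided inverse. The only caveat, which you inherit from the paper rather than introduce yourself, is that the quantities $a\sptilde, b\sptilde, \dots$ are only defined when the corresponding $l_{ij}$, $r_{ij}$ are nonzero (e.g.\ $b\sptilde=l_{12}^{-1}b$ presupposes $b\neq 0$), so the identities should be read as holding whenever both sides are defined; with that understanding the proof is complete.
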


\section{J\o{}rgensen inequality for $\S$}\label{jse}
The following proposition gives a J\o{}rgensen inequality for a two-generator subgroup of $\S$ when one of the generators is semisimple.
\begin{theorem}\label{jss}Let $ S=\begin{pmatrix}a&b\\c&d\end{pmatrix}$ and $T=\begin{pmatrix}{\lambda}&0\\0&{\mu}\end{pmatrix}$, $\lambda$ is not similar to $\mu$,  generate a discrete non-elementary subgroup  of $\S$. Then
 $$\{(\Re\lambda -\Re\mu)^2 +(|\Im \lambda|+|\Im\mu|)^2\}(1+|bc|)\geq 1.$$
\end{theorem}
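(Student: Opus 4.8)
The plan is to run J\o{}rgensen's iteration scheme by contradiction, replacing the usual trace estimates by the inversion formula and the identities recorded before this section. Write $M^2=(\Re\lambda-\Re\mu)^2+(|\Im\lambda|+|\Im\mu|)^2$; this depends only on the similarity invariants $\Re\lambda,\Re\mu,|\Im\lambda|,|\Im\mu|$, so it is conjugation invariant and well defined. Suppose, for contradiction, that $M^2(1+|bc|)<1$, and put $x_0=|bc|$ and $\rho=M^2(1+x_0)<1$; note that $M<1$. Set $S_0=S$ and $S_{n+1}=S_nTS_n^{-1}$, with entries $S_n=\begin{pmatrix}a_n&b_n\\ c_n&d_n\end{pmatrix}$. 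Each $S_n$ lies in the discrete group $\langle S,T\rangle$ and has $\det S_n=1$, so the identity $\alpha_{S_n}=|l_{ij,n}|^2$ together with $\sqrt{\alpha_{S_n}}=\det S_n$ gives $|l_{ij,n}|=1$ for all $i,j$. Writing $S_n^{-1}=\begin{pmatrix}\tilde d_n&-\tilde b_n\\ -\tilde c_n&\tilde a_n\end{pmatrix}$ via Kellerhals' inversion formula then forces $|\tilde a_n|=|a_n|$, $|\tilde b_n|=|b_n|$, $|\tilde c_n|=|c_n|$ and $|\tilde d_n|=|d_n|$.

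The heart of the argument is the contraction of the off-diagonal entries. A direct multiplication gives $c_{n+1}=c_n\lambda\tilde d_n-d_n\mu\tilde c_n$ and $b_{n+1}=-a_n\lambda\tilde b_n+b_n\mu\tilde a_n$. Using the identities $c_n\tilde d_n=d_n\tilde c_n$ and $a_n\tilde b_n=b_n\tilde a_n$ from the preceding lemma, I would factor (when $c_n,d_n\neq 0$; otherwise the entries only become smaller) $c_{n+1}=(c_n\lambda c_n^{-1}-d_n\mu d_n^{-1})\,c_n\tilde d_n$ and $b_{n+1}=-(a_n\lambda a_n^{-1}-b_n\mu b_n^{-1})\,a_n\tilde b_n$. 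Now $c_n\lambda c_n^{-1}\backsim\lambda$ and $d_n\mu d_n^{-1}\backsim\mu$, so the sharp bound follows from the elementary \emph{sub-lemma}: if $\xi\backsim\lambda$ and $\eta\backsim\mu$ then $|\xi-\eta|\le M$, because $\Re(\xi-\eta)=\Re\lambda-\Re\mu$ while $|\Im\xi-\Im\eta|\le|\Im\xi|+|\Im\eta|=|\Im\lambda|+|\Im\mu|$. Since $|c_n\tilde d_n|=|c_n||d_n|$ and $|a_n\tilde b_n|=|a_n||b_n|$, this yields $|c_{n+1}|\le M|c_n||d_n|$ and $|b_{n+1}|\le M|a_n||b_n|$.

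Next I would convert this into a recursion for the single quantity $x_n=|b_nc_n|$. The determinant identity $|l_{22,n}|=\det S_n=1$ reads $|a_nd_n-a_nc_na_n^{-1}b_n|=1$, whence $|a_nd_n|\le 1+|b_nc_n|$. Multiplying the two off-diagonal bounds gives $x_{n+1}\le M^2|a_nd_n|\,x_n\le M^2(1+x_n)x_n$, and an induction based on $\rho=M^2(1+x_0)<1$ shows $x_n\le\rho^n x_0\to 0$. Because $x_n\to 0$ the diagonal entries stay bounded ($|a_n|,|d_n|\le 1+2x_n$), and since $M<1$ one may fix $\varepsilon$ with $M(1+\varepsilon)<1$ and feed $|a_n|,|d_n|\le 1+\varepsilon$ (valid for all large $n$) back into $|c_{n+1}|\le M|c_n||d_n|$ and $|b_{n+1}|\le M|a_n||b_n|$ to get $b_n\to 0$ and $c_n\to 0$; a short estimate on $a_{n+1},d_{n+1}$ then gives $S_n\to T$.

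Finally, discreteness makes $T$ an isolated point of $\langle S,T\rangle$, so $S_n=T$ for all large $n$. Taking the least such $N$, the relation $T=S_N=S_{N-1}TS_{N-1}^{-1}$ shows $S_{N-1}$ centralizes $T$; since $\lambda$ is not similar to $\mu$, the centralizer of $T$ consists of diagonal matrices, so $S_{N-1}$ preserves the fixed-point set $\{0,\infty\}$ of $T$, and descending through the relations $S_k=S_{k-1}TS_{k-1}^{-1}$ forces $S=S_0$ to preserve $\{0,\infty\}$ as well; then $\langle S,T\rangle$ is elementary, contradicting the hypothesis. I expect the two delicate points to be, first, obtaining the \emph{sharp} constant $M$ in the contraction --- this is exactly the step where one must exploit the conjugation invariance of $(\Re,|\Im|)$ rather than a lossy triangle inequality, and where the sum $|\Im\lambda|+|\Im\mu|$ (the worst relative orientation of the imaginary parts) enters --- and, second, the concluding descent, which relies essentially on $\lambda\not\backsim\mu$ to pin down the centralizer and hence the fixed-point set of $T$.
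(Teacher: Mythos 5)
Your proposal is correct and follows essentially the same route as the paper's proof: the Shimizu--Leutbecher iteration $S_{n+1}=S_nTS_n^{-1}$, the factorization of the off-diagonal entries via Kellerhals's identities $a\tilde b=b\tilde a$, $c\tilde d=d\tilde c$ together with $|l_{ij}|=\det S_n=1$, the similarity-class bound $|\xi-\eta|\le M$ for $\xi\backsim\lambda$, $\eta\backsim\mu$ (which is exactly the paper's estimate on $|\lambda-a_n^{-1}b_n\mu\,a\tilde{}_n b\tilde{}_n^{-1}|$), the determinant bound $|a_nd_n|\le 1+|b_nc_n|$, and the resulting contraction $x_{n+1}\le M^2(1+x_n)x_n$. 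Your endgame (forcing $S_N=T$ and descending through the centralizer of $T$, using $\lambda\not\backsim\mu$) is in fact tighter than the paper's one-line appeal to a convergent subsequence; the one loose point is your parenthetical claims $|a_n|,|d_n|\le 1+2x_n$ and $|d_n|\le 1+\varepsilon$, which fail when $|\mu|>1$ (one only gets $|d_n|\to|\mu|$, so the recursion $|c_{n+1}|\le M|d_n||c_n|$ need not contract if $M|\mu|\ge 1$) --- but the paper's own proof elides exactly the same point by asserting $|a_n|\to 1$ after having shown $|a_n|\to|\lambda|$, so this is a shared, not a new, weakness.
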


\begin{proof}
Let us suppose that $K_0=\{(\Re\lambda -\Re\mu)^2 +(|\Im \lambda|+|\Im\mu|)^2\}(1+|bc|)<1.$\\
Consider the Shimizu-Leutbecher sequence defined inductively by $$S_0=\begin{pmatrix}a_0&b_0\\c_0&d_0\end{pmatrix}=S=\begin{pmatrix}a&b\\c&d\end{pmatrix}, ~S_{n+1}=\begin{pmatrix}a_{n+1}&b_{n+1}\\c_{n+1}&d_{n+1}\end{pmatrix}=S_nTS_n^{-1}.$$
Now, \begin{eqnarray}\label{sls}S_{n+1}&=&S_nTS_n^{-1}=\begin{pmatrix}a_n&b_n\\c_n&d_n\end{pmatrix}\begin{pmatrix}\lambda&0\\0&\mu\end{pmatrix}
\begin{pmatrix}d\sptilde_n&-b\sptilde_n\\-c\sptilde_n&a\sptilde_n\end{pmatrix}\\
&=&\begin{pmatrix}a_n\lambda&b_n\lambda\\c_n\lambda&d_n\lambda\end{pmatrix}\begin{pmatrix}d\sptilde_n&-b\sptilde_n\\-c\sptilde_n&a\sptilde_n\end{pmatrix}\\
&=&\begin{pmatrix}a_n\lambda d\sptilde_n-b_n\mu c\sptilde_n&-a_n\lambda b\sptilde_n+b_n\mu a\sptilde_n\\
c_n\lambda d\sptilde_n-d_n\mu c\sptilde_n&-c_n\lambda b\sptilde_n+d_n\mu a\sptilde_n\end{pmatrix}\\
&=&\begin{pmatrix}a_{n+1}&b_{n+1}\\c_{n+1}&d_{n+1}\end{pmatrix}
\end{eqnarray}\\
So,\begin{align*}
a_{n+1}&=a_n\lambda d\sptilde_n-b_n\mu c\sptilde_n,& b_{n+1}&=-a_n\lambda b\sptilde_n+b_n\mu a\sptilde_n\\
c_{n+1}&=c_n\lambda d\sptilde_n-d_n\mu c\sptilde_n,& d_{n+1}&=-c_n\lambda b\sptilde_n+d_n\mu a\sptilde_n
\end{align*}\\
Now, we have \begin{eqnarray*}
|b_{n+1}||c_{n+1}|&=&|(-a_n\lambda b\sptilde_n+b_n\mu a\sptilde_n)(c_n\lambda d\sptilde_n-d_n\mu c\sptilde_n)|\\
&=&|a_nb_nc_nd_n||\lambda-a_n^{-1}b_n\mu a\sptilde_n{b\sptilde_n}^{-1}||\lambda-c_n^{-1}d_n\mu c\sptilde_n{d\sptilde_n}^{-1}|
\end{eqnarray*}
By an easy computation, we see that
\begin{eqnarray*}
|\lambda-a_n^{-1}b_n\mu a\sptilde_n{b\sptilde_n}^{-1}|&=&|\Re\lambda+\Im \lambda-\Re\mu-a_n^{-1}b_n(\Im\mu)a\sptilde_n{b\sptilde_n}^{-1}|, \hbox{ since }\; a_nb\sptilde_n=b_na\sptilde_n\\
&=&|(\Re\lambda -\Re\mu)+\Im \lambda -a_n^{-1}b_n(\Im\mu)a\sptilde_n{b\sptilde_n}^{-1}|\\
&=&\sqrt{(\Re\lambda -\Re\mu)^2 +|\Im \lambda -a_n^{-1}b_n(\Im\mu)a\sptilde_n{b\sptilde_n}^{-1}|^2}\\
&\leq&\sqrt{(\Re\lambda -\Re\mu)^2 +(|\Im \lambda|+|\Im\mu|)^2}.\end{eqnarray*}
Similarly, we may deduce that $|\lambda-c_n^{-1}d_n\mu c\sptilde_n{d\sptilde_n}^{-1}|\leq\sqrt{(\Re\lambda -\Re\mu)^2 +(|\Im \lambda|+|\Im\mu|)^2}$.\\
Therefore,
\begin{equation}
|b_{n+1}||c_{n+1}|\leq |a_nb_nc_nd_n|\{(\Re\lambda -\Re\mu)^2 +(|\Im \lambda|+|\Im\mu|)^2\}\end{equation}
Since $|a_nd_n|\leq 1+|b_nc_n|$, this implies
\begin{equation}\label{ine1}
|b_{n+1}||c_{n+1}| \leq  \{(\Re\lambda -\Re\mu)^2 +(|\Im \lambda|+|\Im\mu|)^2\}(1+|b_nc_n|)|b_nc_n|.
\end{equation}
Since, $K_0=\{(\Re\lambda -\Re\mu)^2 +(|\Im \lambda|+|\Im\mu|)^2\}(1+|bc|)<1$, by using induction process we have the relation, $|b_{n+1}c_{n+1}|\leq K_0^n|bc|\Rightarrow b_nc\sptilde_n\rightarrow 0,\;as\; n\rightarrow \infty$, and so, $a_nd\sptilde_n=1+b_nc\sptilde_n\rightarrow 1,\;as\; n\rightarrow\infty$.\\
Since, $|a_{n+1}|=|a_n\lambda d\sptilde_n -b_n\mu c\sptilde_n|,\;|d_{n+1}|=|-c_n\lambda b\sptilde_n +d_n\mu a\sptilde_n|$,  we have\\
$|\lambda||a_nd\sptilde_n|-|\mu||b_nc\sptilde_n|\leq|a_{n+1}|\leq|\lambda||a_nd\sptilde_n|+|\mu||b_nc\sptilde_n| \Rightarrow |a_{n+1}|\rightarrow |\lambda|,\;as\; n\rightarrow\infty$.\\
Similarly, we have $|d_{n+1}| \rightarrow |\mu|,\;\hbox{ as }\; n\rightarrow\infty$.\\
Again,we have
 \begin{eqnarray*}|b_{n+1}|&=&|-a_n\lambda b\sptilde_n +b_n\mu a\sptilde_n|=|a_nb\sptilde_n||\lambda-a_n^{-1}b_n\mu a\sptilde_n{b\sptilde_n}^{-1}|\\&\leq&|a_nb_n|\sqrt{(\Re\lambda -\Re\mu)^2 +(|\Im \lambda|+|\Im\mu|)^2}\\&\leq&{K_0}|a_n||b_n|\rightarrow {K_0}|b_n|,\;\hbox{ since }\; |a_n|\rightarrow 1.\\&\leq&{K_0^n}|b| \rightarrow 0,\;\hbox{ since }\; K_0<1.
\end{eqnarray*}
Thus, for all positive integers, $|b_n|\rightarrow 0\;\hbox{ as }\; n\rightarrow \infty$, i.e. $b_n\rightarrow 0 \;\hbox{ as }\; n\rightarrow \infty$.\\
Similarly, we may show that $c_n \rightarrow 0 \;\hbox{ as }\; n \rightarrow \infty$.\\
Thus the sequence {$S_n$} has a convergent subsequence and
since the subgroup $\langle{A,B}\rangle$ is discrete, so we arrive at a contradiction. This proves the theorem. \end{proof}
\begin{cor}\label{kele}
Let $ S=\begin{pmatrix}a&b\\c&d\end{pmatrix}$ and $T=\begin{pmatrix}{\lambda}&0\\0&{\mu}\end{pmatrix}\in\S$, $\lambda$ is not similar to $\mu$,  generate a discrete non-elementary subgroup $\langle S, T \rangle$  of $\S$. Then
 $$2(\cosh{\tau}-\cos(\alpha+\beta))(1+|bc|)\geq 1,$$ where $\alpha=arg(\lambda),~\beta=arg(\mu)$, $\tau=2 \log |\lambda|$.
\end{cor}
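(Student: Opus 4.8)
The plan is to deduce the corollary directly from \thmref{jss} by rewriting the quadratic expression $(\Re\lambda-\Re\mu)^2+(|\Im\lambda|+|\Im\mu|)^2$ in polar coordinates and then invoking the normalization $\det(T)=1$. First I would set $\lambda=re^{i\alpha}$ and $\mu=se^{i\beta}$ with $r=|\lambda|$, $s=|\mu|$, $\alpha=\arg(\lambda)$, $\beta=\arg(\mu)$. By the standing convention from the Preliminaries we have $\alpha,\beta\in[0,\pi]$, so that $\sin\alpha,\sin\beta\ge0$; hence $\Re\lambda=r\cos\alpha$, $|\Im\lambda|=r\sin\alpha$, and likewise $\Re\mu=s\cos\beta$, $|\Im\mu|=s\sin\beta$.

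The next step is a routine expansion. Computing the two squares separately and summing them, the cross terms combine through the cosine addition formula, giving
$$(\Re\lambda-\Re\mu)^2+(|\Im\lambda|+|\Im\mu|)^2=r^2+s^2-2rs\cos(\alpha+\beta).$$
At this point the determinant hypothesis does the essential work: since $T\in\S$ is diagonal, its Dieudonn\'e determinant is $|\lambda\mu|=rs$, and $T\in\S$ forces $rs=1$. Thus $2rs\cos(\alpha+\beta)=2\cos(\alpha+\beta)$ and $s=r^{-1}$. Putting $\tau=2\log r=2\log|\lambda|$ gives $r^2=e^{\tau}$ and $s^2=e^{-\tau}$, so that $r^2+s^2=2\cosh\tau$. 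Consequently the bracketed quantity equals $2\bigl(\cosh\tau-\cos(\alpha+\beta)\bigr)$, and substituting this identity into the inequality of \thmref{jss} produces exactly
$$2\bigl(\cosh\tau-\cos(\alpha+\beta)\bigr)(1+|bc|)\ge1,$$
which is the asserted bound.

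This is a computation rather than a genuinely hard argument, so I do not expect any serious obstacle. The only points meriting care are the sign bookkeeping in the imaginary parts, which is settled by the convention $\arg\in[0,\pi]$ ensuring the sines are nonnegative, and the recognition that the determinant-one hypothesis is precisely what converts $r^2+s^2$ into a single hyperbolic cosine; note also that $\cosh$ is even, so the sign of $\tau$ (equivalently whether $|\lambda|>1$ or $|\lambda|<1$) is immaterial, and the elliptic case $r=s=1$ is recovered as $\tau=0$.
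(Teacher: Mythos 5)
Your proposal is correct and follows essentially the same route as the paper: both deduce the corollary from \thmref{jss} by writing $\lambda$, $\mu$ in polar form, expanding $(\Re\lambda-\Re\mu)^2+(|\Im\lambda|+|\Im\mu|)^2$, and using $|\lambda||\mu|=1$ together with the convention $\arg\in[0,\pi]$ to collapse the cross terms into $-2\cos(\alpha+\beta)$ and the squared moduli into $2\cosh\tau$. The only cosmetic difference is that the paper normalizes $|\lambda|=r\ge 1$ at the outset, whereas you keep $s=|\mu|$ general until invoking $rs=1$ and appeal to the evenness of $\cosh$.
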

\begin{proof}Without loss of generality, assume $|\lambda|=r\geq 1$. Observe that,
\begin{eqnarray*}
& &(\Re\lambda -\Re\mu)^2 +(|\Im \lambda|+|\Im\mu|)^2\\
&=&(r \cos\alpha -\frac{1}{r} \cos\beta)^2 +(r|\sin\alpha|+\frac{1}{r}|\sin\beta|)^2\\
&=&r^2+\frac{1}{r^2}-2(\cos\alpha \cos\beta -|\sin\alpha||\sin\beta|\big)\\
&=&2(\cosh \tau-\cos(\alpha+\beta)), \hbox{ where } ~ r=e^{\frac{\tau}{2}}, ~ \tau \geq 0.
\end{eqnarray*}
This completes the proof.
\end{proof}

\begin{remark} \label{jre1} 
Kellerhals \cite[Proposition 3]{kel2} proved the above result assuming $T$ hyperbolic, i.e. when $\tau \neq 0$. However, it follows from above that Kellerhals's result carry over to the elliptic case as well, i.e. when $\tau=0$.  Also we have avoided the normalization of the constant term of $(1+|bc|)$ in the inequality to make it sharp. The \thmref{jss} also extends Waterman's Theorem 9  in \cite{waterman} when restricted to the quaternionic set up. Note that $\S$ is not a Clifford group and hence, Theorem 9 of Waterman does not restrict to $\S$. For example, the element 
$$T=\begin{pmatrix} e^{i \theta} & 0 \\ 0 & e^{i \phi} \end{pmatrix},$$
does not belong the Clifford group ${\rm SL}_2(C_2)$, see \cite[p. 95]{waterman}, but it  belongs to the group  $\S$. This class of elements are also covered by \thmref{jss}. 
\end{remark} 

The next theorem generalizes the J\o{}rgensen's inequality in $\S$ for strictly \hbox{hyperbolic} elements with some given conditions. The formulation resembles the original inequality by J\o{}rgensen.
\begin{cor}\label{jh}
Let $A,B\in \S$ be such that both $A$ and the commutator $[A, B]$ are strictly hyperbolic. If $\langle A,B\rangle$ is a non-elementary discrete subgroup of $\S$, then
$$|\delta^2_A-4|+|\delta_{ABA^{-1}B^{-1}}-2|\geq 1.$$
\end{cor}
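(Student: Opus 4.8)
The plan is to deduce the statement from the semisimple inequality \thmref{jss} by computing the commutator invariant explicitly. Since $A$ is strictly hyperbolic, I would first conjugate the generating pair so that $A$ is put in the real diagonal form $T=\begin{pmatrix}\lambda&0\\0&\lambda^{-1}\end{pmatrix}$ with $\lambda\in\R$ and $|\lambda|>1$; this conjugation changes neither discreteness and non-elementariness of $\langle A,B\rangle$, nor strict hyperbolicity of the commutator, nor the conjugacy invariants $\delta_A$ and $\delta_{ABA^{-1}B^{-1}}$. Writing $B=\begin{pmatrix}a&b\\c&d\end{pmatrix}$, we have $\delta_A=\lambda+\lambda^{-1}$, so $\delta_A^2-4=(\lambda-\lambda^{-1})^2\ge 0$ and $|\delta_A^2-4|=(\lambda-\lambda^{-1})^2$. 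As $|\lambda|>1$, $\lambda$ is not similar to $\lambda^{-1}$, so \thmref{jss} applies to the pair $(T,B)$ and yields $(\lambda-\lambda^{-1})^2(1+|bc|)\ge 1$.

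Next I would compute $\delta_{ABA^{-1}B^{-1}}$. Using $TBT^{-1}=\begin{pmatrix}a&\lambda^2 b\\\lambda^{-2}c&d\end{pmatrix}$ (legitimate because the real scalar $\lambda$ is central) together with the inverse formula $B^{-1}=\begin{pmatrix}d\sptilde&-b\sptilde\\-c\sptilde&a\sptilde\end{pmatrix}$, the diagonal entries of $TBT^{-1}B^{-1}$ are $ad\sptilde-\lambda^2 bc\sptilde$ and $da\sptilde-\lambda^{-2}cb\sptilde$. Invoking the Kellerhals identities $ad\sptilde-bc\sptilde=1=da\sptilde-cb\sptilde$, taking real parts and subtracting $2$ gives
\[
\delta_{ABA^{-1}B^{-1}}-2=(1-\lambda^2)\Re(bc\sptilde)+(1-\lambda^{-2})\Re(cb\sptilde).
\]
I would then show $\Re(bc\sptilde)=\Re(cb\sptilde)$: from $a\sptilde c=c\sptilde a$ we get $c\sptilde=a\sptilde c a^{-1}$, and from $ab\sptilde=ba\sptilde$ we get $ba\sptilde=ab\sptilde$, so $bc\sptilde=ab\sptilde c a^{-1}$ and cyclicity of $\Re$ gives $\Re(bc\sptilde)=\Re(b\sptilde c)=\Re(cb\sptilde)$. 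Calling this common value $\rho$ and using $2-\lambda^2-\lambda^{-2}=-(\lambda-\lambda^{-1})^2$, I obtain $\delta_{ABA^{-1}B^{-1}}-2=-(\lambda-\lambda^{-1})^2\rho$. Finally, since $|l_{21}|^2=\alpha_B=\det(B)^2=1$, we have $|c\sptilde|=|c|$, hence $|bc\sptilde|=|bc|$ and $|\rho|=|\Re(bc\sptilde)|\le|bc|$.

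The decisive step is to promote this to the equality $|\rho|=|bc|$, that is, to prove that the quaternion $bc\sptilde$ is real. This is exactly the role of the hypothesis that $[A,B]$ is strictly hyperbolic: in the complex special case $c\sptilde=c$, and the trace $2-bc(\lambda-\lambda^{-1})^2$ of the commutator is real precisely when $bc\in\R$, which is forced by the reality of its eigenvalues; the quaternionic analogue is to match all the conjugacy invariants of $ABA^{-1}B^{-1}$ with those of a real diagonal matrix (in particular the relation $\gamma=\delta^2+2$ characterising real eigenvalues among the hyperbolic elements) and thereby conclude that $\Im(bc\sptilde)=0$. Granting this, $|\delta_{ABA^{-1}B^{-1}}-2|=(\lambda-\lambda^{-1})^2|bc|$, and adding to the semisimple inequality gives
\[
|\delta_A^2-4|+|\delta_{ABA^{-1}B^{-1}}-2|=(\lambda-\lambda^{-1})^2(1+|bc|)\ge 1,
\]
as claimed. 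I expect this reality step to be the main obstacle, since without it the left-hand sum could be strictly smaller than $(\lambda-\lambda^{-1})^2(1+|bc|)$, and the conclusion would not follow from \thmref{jss} alone.
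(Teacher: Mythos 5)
Your reduction to \thmref{jss} and your computation of the commutator invariant follow the same route as the paper's own proof: after putting $A$ in the form $\mathrm{diag}(k,k^{-1})$ with $k$ real, the paper likewise arrives at $\delta_{ABA^{-1}B^{-1}}-2=-(k-k^{-1})^{2}\,\Re(b\overline{\sigma}c)$, which coincides with your expression because $c\sptilde=l_{21}^{-1}c=\sigma^{-1}c=\overline{\sigma}c$ (as $|\sigma|=1$); your intermediate identities --- $\Re(bc\sptilde)=\Re(cb\sptilde)$ via the Kellerhals relations, and $|bc\sptilde|=|bc|$ from $|l_{21}|=\det B=1$ --- are correct. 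The problem is that your argument stops at exactly the point where the hypothesis on $[A,B]$ must do its work. The claim that $bc\sptilde$ is real is introduced with ``granting this,'' supported only by a heuristic (``match all the conjugacy invariants \dots\ in particular $\gamma=\delta^{2}+2$''): you neither compute $\gamma_{[A,B]}$ or $\beta_{[A,B]}$ in terms of the matrix entries, nor show that these necessary conditions for strict hyperbolicity actually force $\Im(bc\sptilde)=0$. As you yourself observe, without that equality you only obtain $|\delta_{A}^{2}-4|+|\delta_{[A,B]}-2|=(k-k^{-1})^{2}\bigl(1+|\Re(bc\sptilde)|\bigr)\le (k-k^{-1})^{2}(1+|bc|)$, which sits on the wrong side of the bound supplied by \thmref{jss}; the argument as written therefore proves nothing. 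This is a genuine gap, and it is not a peripheral one: the reality of $b\overline{\sigma}c$ is the entire content of the corollary beyond \thmref{jss}.

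For comparison, the paper attacks this step directly. It computes all four entries of $ABA^{-1}B^{-1}$ (in particular the off-diagonal entries $(k^{2}-1)b\sigma^{-1}cac^{-1}$ and $(k^{-2}-1)d\sigma^{-1}c$), expands $b\overline{\sigma}c=b\overline{d}c\overline{a}-|bc|^{2}$ from $\sigma=cac^{-1}d-cb$, and extracts from the strict hyperbolicity of the commutator an algebraic relation, $|ad|^{2}=\overline{b}a\overline{c}d$, which makes $b\overline{\sigma}c=|ad|^{2}-|bc|^{2}$ real. Whether or not one finds every step of that derivation transparent, it is an explicit computation tied to the entries of the commutator; your proposal needs an analogous concrete argument --- for instance via the Parker--Short invariant $\tau_{[A,B]}$, which for a strictly hyperbolic element must be similar to, hence equal to, a real number, and which necessarily involves the off-diagonal entries you have not touched --- before the ``reality step'' can be considered established.
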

\begin{proof}
Let $A=\begin{pmatrix}k&0\\0&k^{-1}\end{pmatrix}$,  where $k>1$ and $B=\begin{pmatrix}a&b\\c&d\end{pmatrix}$ with $c\neq 0$.\\
So, $\delta_A=k+k^{-1}\Rightarrow |\delta^2_A-4|=|(k+k^{-1})^2-4|=|k-k^{-1}|^2$.\\
Now,\begin{eqnarray*}
AB&=&\begin{pmatrix}k&0\\0&k^{-1}\end{pmatrix}\begin{pmatrix}a&b\\c&d\end{pmatrix}=\begin{pmatrix}ka&kb\\k^{-1}c&k^{-1}d\end{pmatrix}\\
ABA^{-1}B^{-1}&=&\begin{pmatrix}ka&kb\\k^{-1}c&k^{-1}d\end{pmatrix}\begin{pmatrix}k^{-1}&0\\0&k\end{pmatrix}\begin{pmatrix}c^{-1}d\sigma^{-1}c&-a^{-1}b\sigma^{-1}cac^{-1}\\-\sigma^{-1}c&\sigma^{-1}cac^{-1}\end{pmatrix}\\
&=&\begin{pmatrix}a&k^2b\\k^{-2}c&d\end{pmatrix}\begin{pmatrix}c^{-1}d\sigma^{-1}c&-a^{-1}b\sigma^{-1}cac^{-1}\\-\sigma^{-1}c&\sigma^{-1}cac^{-1}\end{pmatrix}\\
&=&\begin{pmatrix}ac^{-1}d\sigma^{-1}c-k^2b\sigma^{-1}c&(k^2-1)b\sigma^{-1}cac^{-1}\\(k^{-2}-1)d\sigma^{-1}c&d\sigma^{-1}cac^{-1}-k^{-2}ca^{-1}b\sigma^{-1}cac^{-1}\end{pmatrix}.
\end{eqnarray*}
So, we have,
\begin{eqnarray*}\delta_{ABA^{-1}B^{-1}}&=&\Re(ac^{-1}d\sigma^{-1}c-k^2b\sigma^{-1}c)+\Re(d\sigma^{-1}cac^{-1}-k^{-2}ca^{-1}b\sigma^{-1}cac^{-1})\\
&=&\Re(ac^{-1}d\sigma^{-1}c)-k^2\Re(b\sigma^{-1}c)+\Re(d\sigma^{-1}cac^{-1})-k^{-2}\Re(ca^{-1}b\sigma^{-1}cac^{-1})\\
&=&2\Re(cac^{-1}d\overline{\sigma})-(k^2+k^{-2})\Re(b\overline{\sigma}c)\\
&=&2(1+\Re(cb\overline{\sigma}))-(k^2+k^{-2})\Re(b\overline{\sigma}c),  \hbox{ since }\; \sigma=cac^{-1}d-cb.\\
&=&2-(k^2+k^{-2}-2)\Re(b\overline{\sigma}c)\\
&=&2-(k-k^{-1})^2\Re(b\overline{\sigma}c).
\end{eqnarray*}
This implies that $|\delta_{ABA^{-1}B^{-1}}-2|=|k-k^{-1}|^2|\Re(b\overline{\sigma}c)|$. Since $ABA^{-1}B^{-1}$ is strictly hyperbolic, we have
 $$b\overline{\sigma}c=b\overline{d}c\overline{a}-|bc|^2\Rightarrow \Re(b\overline{\sigma}c)=\Re(b\overline{d}c\overline{a})-|bc|^2=\Re(a\overline{c}d\overline{b})-|bc|^2.$$
Also, we have  $b\overline{\sigma}cac^{-1}=0\Rightarrow b\overline{(cac^{-1}d-cb)}cac^{-1}=0\Rightarrow |b|^2(|ad|^2-\overline{b}a\overline{c}d)=0\Rightarrow |ad|^2=\overline{b}a\overline{c}d \hbox{ since} \; bc\neq 0$, for  otherwise $\langle A,B \rangle $ becomes elementary. This shows that $b\overline{\sigma}c=|ad|^2-|bc|^2=\Re(b\overline{\sigma}c)$. Thus,
$$|\delta^2_A-4|+|\delta_{ABA^{-1}B^{-1}}-2|=|k-k^{-1}|^2(1+|bc|).$$
Now the theorem follows from \thmref{ext1}.
\end{proof}

The following two corollaries give weaker versions of \thmref{jss}. 
\begin{cor}\label{jss2}
Suppose $S=\begin{pmatrix} a&b\\ c&d \end{pmatrix}$ and  $T=\begin{pmatrix} \lambda& 0\\ 0& \mu\end{pmatrix}$  generate a non-elementary discrete subgroup in $\S$. Then we have
$$\beta (T) L^k \geq 1,$$
where $$\beta (T) = \displaystyle{\sup_{e,f \neq 0,\infty} |(\lambda -e \mu e^{-1})(\lambda - f \mu f^{-1})|},$$
$$L = 1+|\mu| \;\;\text{and} \;\;k= [1+|bc|] +1,$$
$[~.~]$ denotes the greatest integer function .
\end{cor}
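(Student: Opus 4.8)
The plan is to derive the corollary from \thmref{jss} rather than rerun the Shimizu--Leutbecher argument. First I would record that the supremum defining $\beta(T)$ splits, since $e$ and $f$ range independently, giving $\beta(T)=\big(\sup_{e\neq 0}|\lambda-e\mu e^{-1}|\big)^{2}$; as $e\mu e^{-1}$ sweeps out the entire similarity class of $\mu$ (all quaternions with real part $\Re\mu$ and imaginary part of modulus $|\Im\mu|$), the supremum is realized when $\Im(e\mu e^{-1})$ points opposite to $\Im\lambda$, so that $\beta(T)=(\Re\lambda-\Re\mu)^{2}+(|\Im\lambda|+|\Im\mu|)^{2}$. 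This is exactly the constant of \thmref{jss}, which therefore already yields $\beta(T)(1+|bc|)\geq 1$. Hence it suffices to prove $L^{k}\geq 1+|bc|$, for then $\beta(T)L^{k}\geq\beta(T)(1+|bc|)\geq 1$.

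If $\beta(T)\geq 1$ nothing is needed, since $L\geq 1$ and $k\geq 1$ give $\beta(T)L^{k}\geq 1$ at once; so I would assume $\beta(T)<1$ and extract a lower bound on $|\mu|$. From $\beta(T)<1$ one reads off $|\Im\lambda|<1$ and $|\Re\lambda-\Re\mu|<1$. When $|\mu|\geq 1$ we have $L\geq 2$ and are done with room to spare; when $|\mu|\leq 1$ then $|\Re\mu|\leq 1$, hence $|\Re\lambda|<2$, so $|\lambda|^{2}=(\Re\lambda)^{2}+(\Im\lambda)^{2}<4+1=5$. Since $T\in\S$ forces $|\lambda|\,|\mu|=1$, this gives $|\mu|^{2}=1/|\lambda|^{2}>1/5$, i.e. $|\mu|>1/\sqrt5$. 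In both cases $L=1+|\mu|>1+1/\sqrt5>e^{1/e}$.

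The last step is analytic. Put $s=1+|bc|\geq 1$. Since $\max_{s\geq 1}\frac{\ln s}{s}=\frac1e$, the bound $L\geq e^{1/e}$ gives $\ln L\geq\frac{\ln s}{s}$, that is $L^{s}\geq s$ for every $s\geq 1$. As $k=[1+|bc|]+1\geq 1+|bc|=s$ and $L\geq 1$, we conclude $L^{k}\geq L^{s}\geq s=1+|bc|$, which closes the reduction above.

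The point I expect to require the most care is the numerical margin in the second paragraph: the eigenvalue estimate only just clears the threshold, with $1+1/\sqrt5\approx 1.4472$ against $e^{1/e}\approx 1.4447$, and the fact that $\frac1e=\max_{s\geq1}\ln s/s$ is precisely what pins down the exponent $[1+|bc|]+1$ in the statement (a smaller exponent would fail near $s=e$). A secondary issue is the degenerate case $\lambda\sim\mu$, which is excluded in \thmref{jss}: there $|\lambda|=|\mu|=1$ and $\beta(T)=4|\Im\lambda|^{2}$ may still be $<1$, so either the corollary should inherit the standing hypothesis $\lambda\not\sim\mu$, or this near-scalar $T$ must be treated separately.
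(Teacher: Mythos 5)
Your proof is correct, and although it follows the same overall reduction to \thmref{jss} as the paper (via $\beta(T)\geq K$ with $K=(\Re\lambda-\Re\mu)^2+(|\Im\lambda|+|\Im\mu|)^2$, together with the estimate $L^k\geq 1+|bc|$), it diverges at precisely the step where the paper's own argument is deficient. The paper simply asserts ``since $L>1$, $k>2$, note that $1+|bc|\leq k\leq L^k$''; but $k\leq L^k$ is false in general for $L$ close to $1$ (e.g.\ $|\mu|=0.1$ gives $L=1.1$, and with $k=3$ one has $L^k=1.331<3$), so the paper's chain $\beta(T)L^k\geq K(1+|bc|)\geq 1$ does not go through as written. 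Your case split repairs exactly this: when $\beta(T)\geq 1$ the conclusion is immediate from $L\geq 1$, $k\geq 1$, and when $\beta(T)<1$ you extract $|\mu|>1/\sqrt 5$ from the Dieudonn\'e determinant constraint $|\lambda||\mu|=1$, so that $L>1+1/\sqrt 5>e^{1/e}$ and hence $L^s\geq s$ for all $s\geq 1$, which combined with $k\geq 1+|bc|$ yields $L^k\geq 1+|bc|$; the numerical margin ($1+1/\sqrt5\approx 1.4472$ against $e^{1/e}\approx 1.4447$) is thin but valid. You also compute $\beta(T)=K$ exactly, where the paper only records $K\leq\beta(T)$; both suffice. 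Finally, your closing caveat is well taken and applies equally to the paper's own proof: \thmref{jss} carries the hypothesis that $\lambda$ is not similar to $\mu$, which \corref{jss2} as stated omits, so the case $\lambda\sim\mu$ (where $|\lambda|=|\mu|=1$ and $K=4|\Im\lambda|^2$ may be small) is covered by neither argument and should be excluded by hypothesis or treated separately.
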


\begin{proof} Since $L>1$, $k>2$, note that $1+|bc| \leq k \leq L^k$.  Let 
$$K=(\Re\lambda -\Re\mu)^2 +(|\Im \lambda|+|\Im\mu|)^2.$$
Using conjugation if necessary, suppose without loss of generality that $\lambda$, $\mu$ are complex numbers. Note that, both $\beta(T)$ and $K$ are invariant if we conjugate the matrix $T$ in the above theorems to a diagonal matrix in $\S$ over the complex numbers. 
Note that 
$$|\lambda-j \mu j^{-1}|= \sqrt{ (\Re \lambda - \Re \mu)^2 + (|\Im \lambda|^2 + |\Im \mu|^2)},$$
hence $K \leq \beta(T)$. 

Further note that a diagonal element $T \in \S$ can be conjugated to a diagonal matrix $T' \in {\rm SL}(2, \C)$ and the conjugation is done using a diagonal element in $\S$. So, given $\langle S, T \rangle$ as in the above results, if we conjugate it to $\langle DSD^{-1}, DTD^{-1} \rangle$, where $D$ a diagonal matrix in $\S$, then the conjugation makes $DTD^{-1}$ a diagonal matrix over $\C$. And further, it is easy to check that if $DTD^{-1}=\begin{pmatrix} a' & b ' \\ c' & d' \end{pmatrix}$, then $|b' c'|=|bc|$. Thus conjugation of $\langle S, T \rangle$ by a diagonal matrix in $\S$ does not change the left hand sides of the above inequalities. 

Since $\langle S, T \rangle$ is discrete, non-elementary, $K(1+|bc|)>1$. Hence 
$$\beta(T)L^k \geq K(1+|bc|) \geq 1.$$
\end{proof}
\begin{cor}\label{jssc2}
Suppose $S=\begin{pmatrix} a&b\\ c&d \end{pmatrix}$ and  $T=\begin{pmatrix} \lambda& 0\\ 0& \mu\end{pmatrix}$  generate a non-elementary discrete subgroup in $\S$. Then we have
$$\beta (T)(1+|bc|)\geq1,$$
where $\beta (T) = \displaystyle{\sup_{e,f \neq 0,\infty} |(\lambda -e \mu e^{-1})(\lambda - f \mu f^{-1})|}$.
\end{cor}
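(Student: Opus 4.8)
The plan is to obtain this inequality as an immediate consequence of \thmref{jss}, reducing everything to the single comparison
$$K \leq \beta(T), \qquad K = (\Re\lambda - \Re\mu)^2 + (|\Im\lambda| + |\Im\mu|)^2.$$
Once this estimate is in hand, the corollary follows at once: since $\langle S, T\rangle$ is discrete and non-elementary, \thmref{jss} gives $K(1+|bc|) \geq 1$, and then $\beta(T)(1+|bc|) \geq K(1+|bc|) \geq 1$.

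First I would record the invariance already exploited in the proof of \corref{jss2}: conjugating $\langle S, T\rangle$ by a diagonal matrix $D \in \S$ leaves $\beta(T)$, the quantity $K$, and the product $|bc|$ all unchanged, and it can be used to normalize $\lambda, \mu$ to complex numbers. This is exactly the step where one checks that if $DTD^{-1}$ has off-diagonal entries $b', c'$ then $|b'c'| = |bc|$, so no information is lost and the two sides of the claimed inequality are genuinely conjugation-stable. Thus I may assume $\lambda, \mu \in \C$ throughout.

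The heart of the matter is the estimate $K \leq \beta(T)$. Because the quaternionic norm is multiplicative, the supremum defining $\beta(T)$ decouples,
$$\beta(T) = \Big(\sup_{e \neq 0,\infty} |\lambda - e\mu e^{-1}|\Big)^2,$$
so it suffices to maximize a single factor. As $e$ ranges over the nonzero quaternions, $e\mu e^{-1}$ sweeps out the entire similarity class of $\mu$, which (since similarity is governed by equal real part and equal $|\Im|$) is the $2$-sphere of purely imaginary vectors of norm $|\Im\mu|$ translated by $\Re\mu$. Writing $\lambda = \Re\lambda + \Im\lambda$, one has $|\lambda - e\mu e^{-1}|^2 = (\Re\lambda - \Re\mu)^2 + |\Im\lambda - v|^2$ with $|v| = |\Im\mu|$; choosing $v$ antiparallel to $\Im\lambda$ gives $|\Im\lambda - v| = |\Im\lambda| + |\Im\mu|$, so the supremum equals $\sqrt{K}$ and in fact $\beta(T) = K$. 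In particular $K \leq \beta(T)$.

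I expect this geometric maximization to be the only real obstacle: everything rests on recognizing that the conjugacy orbit of $\mu$ is a round $2$-sphere about the real axis of radius $|\Im\mu|$, and that the point of this sphere farthest from the fixed vector $\Im\lambda$ is the antipode, at distance $|\Im\lambda| + |\Im\mu|$. The rest is bookkeeping inherited from \corref{jss2} together with a direct appeal to \thmref{jss} (whose hypothesis that $\lambda$ is not similar to $\mu$ is the relevant setting here). Note also that this argument yields \corref{jss2} as a weaker consequence, since $1 + |bc| \leq L^k$ in that corollary's notation.
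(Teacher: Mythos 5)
Your proof is correct and follows essentially the same route as the paper: the paper's (unwritten) proof of this corollary is exactly the argument it gives for \corref{jss2}, namely the comparison $K \le \beta(T)$ (obtained there by the single choice $e=f=j$ after normalizing $\lambda,\mu$ to be complex) combined with $K(1+|bc|)\ge 1$ from \thmref{jss}. Your geometric maximization over the similarity sphere of $\mu$ sharpens this to the equality $\beta(T)=K$, which is more than is needed, and you share with the paper the same implicit reliance on the hypothesis $\lambda\not\sim\mu$ when invoking \thmref{jss}.
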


\medskip The next theorem gives J\o{}rgensen inequality for a two-generator subgroup where one of the generators fixes $\infty$.
\begin{theorem}\label{jg}
Suppose $S=\begin{pmatrix} a&b\\ c&d \end{pmatrix}$ and $T=\begin{pmatrix} \lambda& \eta \\ 0& \mu \end{pmatrix}$, where $\Re \lambda= \Re \mu \neq 0$,  $|\lambda|\leq 1\leq |\mu|$, generate a non-elementary discrete subgroup in $\S$. Suppose \\ \hbox{$S(\lambda , \mu) = |\mu| (|\Im \lambda| + | \Im \mu| ) \leq \frac{1}{4\sqrt 2}$}.
Then we have
$$|c| \sqrt{|\tau_0| |t_0|} \geq  \frac{1+\sqrt{1-4\sqrt  2 S(\lambda ,\mu)}}{2},$$
where  $\tau_0 = \lambda(- c^{-1}d) +\eta +(c^{-1}d)  \mu\;\hbox{and, }t_0 = \lambda(ac^{-1}) +\eta - (ac^{-1})  \mu$.
\end{theorem}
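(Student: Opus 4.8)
The plan is to run a Shimizu--Leutbecher argument on the conjugation sequence, exactly as in the proof of \thmref{jss}, but now tracking the two ``displacement'' quantities $\tau_n$ and $t_n$ simultaneously. Assume for contradiction that $|c|\sqrt{|\tau_0||t_0|}<\rho$, where $\rho=\tfrac12\bigl(1+\sqrt{1-4\sqrt2\,S(\lambda,\mu)}\bigr)$, and set $S_0=S$, $S_{n+1}=S_nTS_n^{-1}$ with entries $a_n,b_n,c_n,d_n$. Writing $f(p)=\lambda p+\eta-p\mu$ for the map whose zeros are exactly the fixed points of $T$, I first note that the lower-left entry factors as $c_{n+1}=-\,c_n\tau_n c\sptilde_n$, where $\tau_n=f(q_n)$ and $q_n=S_n^{-1}(\infty)=-c_n^{-1}d_n$; since $|c\sptilde_n|=|c_n|$ (because $|l_{21}|=\sqrt\alpha=1$), this yields the exact relation $|c_{n+1}|=|c_n|^2|\tau_n|$. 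I also put $t_n=f(p_n)$ with $p_n=S_n(\infty)=a_nc_n^{-1}$, so that $\tau_0,t_0$ are the quantities in the statement.

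The heart of the argument is to control how $p_n,q_n$ (hence $\tau_n,t_n$) move under the iteration. Since $S_{n+1}=S_nTS_n^{-1}$ one has $p_{n+1}=S_n\bigl(T(q_n)\bigr)$ and $q_{n+1}=S_n\bigl(T^{-1}(q_n)\bigr)$, and a direct computation gives $T(q_n)=q_n+\tau_n\mu^{-1}$ and $T^{-1}(q_n)=q_n-\lambda^{-1}\tau_n$. Expanding the M\"obius map $S_n$ about its pole $q_n$, namely $S_n(q_n+\varepsilon)=p_n+(a_nq_n+b_n)\,\varepsilon^{-1}c_n^{-1}$, and using $c_n(a_nq_n+b_n)=-l_{21}^{(n)}$ together with $|l_{21}^{(n)}|=1$ to get $|a_nq_n+b_n|=|c_n|^{-1}$, I obtain the exact displacement estimates
$$|p_{n+1}-p_n|=\frac{|\mu|}{|c_{n+1}|},\qquad |q_{n+1}-p_n|=\frac{|\lambda|}{|c_{n+1}|}.$$
Thus both $p_{n+1}$ and $q_{n+1}$ lie within a controlled distance of $p_n$.

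Next I convert these into estimates on $\tau_n,t_n$. Because $\Re\lambda=\Re\mu$, for any displacement $\Delta$ the real parts cancel and $f(p+\Delta)-f(p)=\Im\lambda\,\Delta-\Delta\,\Im\mu$, whence $|f(p+\Delta)-f(p)|\le(|\Im\lambda|+|\Im\mu|)\,|\Delta|$. Applying this to the two displacements above, and using $|\lambda|\le1\le|\mu|$ to replace $|\lambda|$ by $|\mu|$, bounds both $|t_{n+1}-t_n|$ and $|\tau_{n+1}-t_n|$ by a multiple of $S(\lambda,\mu)/|c_{n+1}|$. Setting $m_n=|c_n|\sqrt{|\tau_n||t_n|}$ and using $|c_{n+1}||t_n|=|c_n|^2|\tau_n||t_n|=m_n^2$, these combine into a scalar recursion of the form
$$m_{n+1}\le m_n^2+\kappa\,S(\lambda,\mu),$$
where $\kappa$ is an absolute constant arising from the joint control of the displacements of $p_n$ and $q_n$.

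Finally I analyse $g(x)=x^2+\kappa\,S(\lambda,\mu)$. The hypothesis $S(\lambda,\mu)\le\frac1{4\sqrt2}$ guarantees two real fixed points, and the threshold $\rho$ is exactly the larger fixed point of $g$ for the value $\kappa=\sqrt2$; moreover $g$ is increasing with $g(\rho)=\rho$. If $m_0<\rho$, then $m_n<\rho<1$ for all $n$ by induction, so $m_n$ stays bounded; feeding this back through $|c_{n+1}|=|c_n||\tau_n|$ and the now-bounded values of $\tau_n,t_n$ forces $c_n\to0$ and keeps all entries of $S_n$ bounded. Hence $\{S_n\}$ has a convergent subsequence of distinct elements (distinctness being forced by non-elementarity, since $S_n=T$ would make $\langle S,T\rangle$ elementary), contradicting discreteness. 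I expect the main obstacles to be the non-commutative pole expansion of $S_n$ in the second step, pinning the constant in the recursion down to the value $\kappa=\sqrt2$ dictated by the stated threshold (a crude triangle-inequality estimate gives the sharper $\kappa=1$, so matching the statement requires the specific combined estimate for the two fixed points), and verifying that boundedness of $m_n$ genuinely forces $c_n\to0$ rather than merely bounded $|c_n|$.
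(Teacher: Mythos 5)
Your proposal is correct in its core and follows the same overall Shimizu--Leutbecher strategy as the paper (the factorization $c_{n+1}=-c_n\tau_n c\sptilde_n$, a scalar recursion for $m_n=|c_n|\sqrt{|\tau_n||t_n|}$, and a convergence-versus-discreteness contradiction), but you derive the key recursive inequality by a genuinely different route. The paper computes $a_{n+1},d_{n+1}$ entrywise, substitutes into the definition of $\tau_{n+1}$, and bounds the error term using $|\cos\alpha|+|\sin\alpha|\le\sqrt2$, which is where the constant $\sqrt2$ comes from. You instead track the fixed points $p_n=S_n(\infty)$, $q_n=S_n^{-1}(\infty)$, use the exact displacements $T(q_n)=q_n+\tau_n\mu^{-1}$, $T^{-1}(q_n)=q_n-\lambda^{-1}\tau_n$ and the pole expansion of $S_n$ (with $|a_nq_n+b_n|=|c_n|^{-1}$ from $|l_{21}|=1$), and then exploit $\Re\lambda=\Re\mu$ to cancel real parts in $f(p+\Delta)-f(p)$. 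I checked these identities in the quaternionic setting and they hold. This yields $m_{n+1}\le m_n^2+S(\lambda,\mu)$, i.e.\ $\kappa=1$, which is \emph{stronger} than the paper's $\kappa=\sqrt2$; you need not ``pin down'' $\kappa=\sqrt2$, since the larger fixed point of $x^2+S$ dominates that of $x^2+\sqrt2\,S$ and the stated inequality follows a fortiori. Your route is cleaner and in fact sharpens \thmref{jg}.

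The one genuine soft spot is the endgame. Boundedness of $m_n$, $|\tau_nc_n|$, $|t_nc_n|$ gives $|c_n|\to0$ (since $|c_{n+1}|=|\tau_nc_n||c_n|$ and $|\tau_nc_n|$ is eventually below the smaller fixed point $<\tfrac12$), but it does \emph{not} by itself keep $a_n,b_n,d_n$ bounded: e.g.\ $|f(p)|$ need not control $|p|$ when $\Im\lambda\,p-p\,\Im\mu$ degenerates, so bounded $t_n$ does not bound $p_n=a_nc_n^{-1}$. To extract a convergent subsequence of distinct elements you still need the explicit entrywise estimates the paper records, namely $|a_{n+1}|\le|\tau_nc_n||a_n|+2/r$, $|d_{n+1}|\le|\tau_nc_n||a_n|+2r$ and the bound on $|b_{n+1}|$; these should be added to your argument, after which the contradiction with discreteness goes through as you describe.
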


\begin{proof}
Let $\alpha=\arg \lambda,~\beta = \arg \mu$.
Denote $r = |\lambda|$, where we see that $r^2 \cos \alpha= \cos \beta$. Consider the Shimizu-Leutbecher sequence
$$S_0 =S, ~S_{n+1}=S_n T {S_n}^{-1}, \; \hbox{ where }S_n =\begin{pmatrix}a_n &b_n\\ c_n &d_n\end{pmatrix}.$$
Now,
\begin{eqnarray*}
S_{n+1} &=& S_n T {S_n}^{-1}\\
&=& \begin{pmatrix} a_n&b_n\\c_n &d_n\end{pmatrix} \begin{pmatrix} \lambda & \eta\\0& \mu\end{pmatrix} \begin{pmatrix}d\sptilde_n&-b\sptilde_n\\-c\sptilde_n&a\sptilde_n\end{pmatrix}\\
&=&\begin{pmatrix}a_n\lambda d\sptilde_n- a_n \eta c\sptilde_n-b_n\mu c\sptilde_n&-a_n\lambda b\sptilde_n+ a_n \eta a\sptilde_n +b_n\mu a\sptilde_n\\
c_n\lambda d\sptilde_n - c_n \eta c\sptilde_n - d_n \mu c\sptilde_n&-c_n\lambda b\sptilde_n+ c_n  \eta a\sptilde_n+d_n\mu a\sptilde_n. \end{pmatrix}
\end{eqnarray*}
Define $\tau_n, t_n$ by
\begin{eqnarray}
 \tau_n  &=& {\lambda} (-{c_n}^{-1} d_n)  + \eta + ({c_n}^{-1} d_n)\mu\\
 t_n &=&  \lambda(a_n {c_n}^{-1}) + \eta-( a_n {c_n}^{-1})  \mu.
\end{eqnarray}
Since $\Re \lambda=\Re\mu$ by assumption, using this we obtain
\begin{eqnarray}
 \tau_n  &=& \Im{\lambda} (-{c_n}^{-1} d_n)  + \eta + ({c_n}^{-1} d_n)\Im \mu\\
 t_n &=& \Im \lambda(a_n {c_n}^{-1}) + \eta-( a_n {c_n}^{-1}) \Im \mu.
\end{eqnarray}
We see that
\begin{eqnarray*}
c_{n+1} &=& c_n\lambda d\sptilde_n-c_n c\sptilde_n - d_n\lambda c\sptilde_n\\
&=& c_n (\Im \lambda (d\sptilde_n{c\sptilde_n}^{-1}) - \eta - ({c_n}^{-1}d_n) \Im \mu)c\sptilde_n\\
&=&  -c_n \{\Im \lambda(-{c_n}^{-1}d_n)  + \eta + ( c_n^{-1}d_n)\Im \mu \} c\sptilde_n\\
 &=& - c_n \tau_n  c\sptilde_n \\
&\Rightarrow&  |c_{n+1}| = |\tau_n c_n| |c_n|.
\end{eqnarray*}
\begin{eqnarray*}
d_{n+1} &=& -c_n\lambda b\sptilde_n+ c_n  \eta a\sptilde_n+d_n\mu a\sptilde_n \\
&=& \Re  \lambda (d_n a\sptilde_n - c_n b\sptilde_n) + c_n \{ \Im \lambda (- b\sptilde_n {a\sptilde_n}^{-1}) + \eta +({c_n}^{-1} d_n) \Im \mu\}a\sptilde_n\\
&=& \Re  \lambda + c_n \{ \Im \lambda ({a_n}^{-1}{c\sptilde_n}^{-1} - {c_n}^{-1}d_n) +\eta + ({c_n}^{-1} d_n) \Im \mu\}a\sptilde_n \\&=& r \cos \alpha + c_n \tau_n a\sptilde_n + c_n \Im \lambda~~{ a_n}^{-1}{c\sptilde_n}^{-1} a\sptilde_n \end{eqnarray*}
By similar computations, we have
$$ a_{n+1} = r \cos \alpha - a_n \tau_n c\sptilde_n + {c\sptilde_n}^{-1} \Im \mu ~c\sptilde_n.$$
Using above equalities, we see that
\begin{eqnarray*}
\tau_{n+1} &=& \Im \lambda (- {c_{n+1}^{-1}}d_{n+1}) + \eta + ({c_{n+1}}^{-1} d_{n+1}) \Im \mu \\
&=& \Im \lambda\{{c\sptilde_n}^{-1} {\tau_n}^{-1} {c_n}^{-1}(r \cos \alpha + c_n \tau_n a\sptilde_n + c_n \Im \lambda~ {a_n}^{-1}{c\sptilde_n}^{-1} a\sptilde_n)\} +\\
&{}& \eta - \{{c\sptilde_n}^{-1} {\tau_n}^{-1} {c_n}^{-1}(r \cos \alpha + c_n \tau_n a\sptilde_n + c_n \Im \lambda~ {a_n}^{-1}{c\sptilde_n}^{-1} a\sptilde_n)\} \Im \mu\\
&=& t_n + r \cos \alpha \;\Im \lambda~ {c\sptilde_n}^{-1}{\tau_n}^{-1}{c_n}^{-1} + \Im \lambda~ ~{c\sptilde_n}^{-1}{\tau_n}^{-1} \Im \lambda~ ~{a_n}^{-1}{c\sptilde_n}^{-1}a\sptilde_n - \\&{}&r \cos \alpha ~{c\sptilde_n}^{-1}{\tau_n}^{-1}{c_n}^{-1} \Im \mu -  {c\sptilde_n}^{-1}{\tau_n}^{-1} \Im \lambda~~ {a_n}^{-1} {c\sptilde_n}^{-1}{a\sptilde_n} \Im \mu \\
\Rightarrow |\tau_{n+1}| &\leq& |t_n| + \frac{(r^2 |\sin \alpha| + |\sin \beta|)(|\cos \alpha| + |\sin \alpha|)}{|\tau_n {c_n}^2|}, \hbox{ using, } |a_n \sptilde|=|a_n||l_{22}^{-1}|,\\
& & |l_{22}^{-1}|=\det S_n =1, \hbox{ and, }~ |\Im \lambda|=|\lambda||\sin \alpha|,~ |\Im \mu|=|\mu||\sin \beta|\\ \\
\Rightarrow |\tau_{n+1}c_{n+1}| &\leq& |\tau_n c_n||t_n c_n| + \sqrt 2 S(\lambda , \mu), ~\hbox{ since, }|\cos \alpha|+|\sin \alpha|\leq \sqrt 2, ~
\text{where, }\end{eqnarray*}
\begin{eqnarray*}
 S(\lambda , \mu) &=& (|\sin \alpha| + |\mu|^2 |\sin \beta|)\\
& = &  (\frac{|\Im \lambda|}{|\lambda|} + |\mu | |\Im \mu|)\\
&=&  |\mu|(|\Im \lambda| + |\Im \mu|).
\end{eqnarray*}
Similarly we also have,
$$|t_{n+1} c_{n+1}| \leq  |\tau_n c_n||t_n c_n| + \sqrt 2 S(\lambda , \mu)$$
In a similar way we can have
\begin{eqnarray*}
|d_{n+1}| &\leq& |\tau_n c_n| |a_n| + 2r\\
|a_{n+1}| &\leq& |\tau_n c_n| |a_n| + \frac{2}{r}\\
\end{eqnarray*}
Also, $|b_{n+1}| \leq |a_n|^2 + r S(\lambda , \mu) |a_n| |b_n|$\\

 Considering the sequence
$$ x_0 = |c| \sqrt {|\tau_0| |t_0|}, ~  x_{n+1} = x_n^2 + \sqrt 2 S(\lambda , \mu).$$
If $ 0 \leq x_0 < \frac{1+\sqrt{1-4 \sqrt 2  S(\lambda , \mu)}}{2} \leq 1$, then $\{x_n\}$ is a monotonically decreasing sequence of real numbers and is bounded above by $ \frac{1+\sqrt{1-4 \sqrt 2 S(\lambda , \mu)}}{2}$ and converges to $\frac{1-\sqrt{1-4 \sqrt 2 S(\lambda , \mu)}}{2}$. Hence
$$|t_n c_n| < \frac{1+\sqrt{1-4 \sqrt 2 S(\lambda , \mu)}}{2}\leq 1, \hbox{ and }$$
$$|\tau_n c_n|<\frac{1+\sqrt{1-4  \sqrt 2 S(\lambda , \mu)}}{2}\leq 1.$$
One a subsequence $|t_n c_n|$ and $|\tau_n c_n|$ converges to values at most $\frac{1-\sqrt{1-4 \sqrt 2 S(\lambda , \mu)}}{2}$. Hence on a subsequence $|a_n|$, $|b_n|$, $|c_n|$, $|d_n|$ converge. In particular, $|c_n| \to 0$. Note that $c_n \neq 0$ unless $c=0$ and $c$ can not be zero as the group $\langle S, T \rangle$ is non-elementary by assumption.

This proves the theorem.
\end{proof}
\begin{cor}
Suppose $S=\begin{pmatrix} a&b\\ c&d \end{pmatrix}$ and $T=\begin{pmatrix} \lambda& \eta \\ 0& \mu \end{pmatrix}$, where $\Re \lambda= \Re \mu\neq 0 $, $\eta \neq 0$,  $|\lambda|\leq 1\leq |\mu|$, \hbox{generate} a non-elementary discrete subgroup in $\S$. Suppose,
$$S'(\lambda , \mu) = \frac {|\mu|}{|\eta|^2}(|\Im \lambda| + | \Im \mu| ).$$
Then we have
$$|c| \sqrt{|\tau'_0| |t'_0|} \geq  \frac{1+\sqrt{1-4 \sqrt 2 |\eta|^2 S'(\lambda ,\mu)}}{2|\eta|},$$
where  $\tau'_0 = \lambda(- c^{-1}d){\eta}^{-1}+ 1 +(c^{-1}d)\mu {\eta}^{-1}\;\hbox{and, }t'_0 = \lambda(ac^{-1}){\eta}^{-1} +1 - (ac^{-1})\mu {\eta}^{-1}$.
\end{cor}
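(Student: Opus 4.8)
The plan is to recognize this corollary as a purely algebraic rewriting of \thmref{jg}, obtained by factoring the off-diagonal entry $\eta$ out of the relevant quantities, rather than by rerunning the Shimizu--Leutbecher argument. The underlying idea is that the primed data $\tau'_0, t'_0, S'$ are exactly the unprimed data of \thmref{jg} after the off-diagonal entry of $T$ has been normalized, so the whole inequality should reduce to that theorem once the factors of $\eta$ are tracked.

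First I would record the two identities relating the primed quantities to those of \thmref{jg}. Writing $\tau_0 = \lambda(-c^{-1}d) + \eta + (c^{-1}d)\mu$ and $t_0 = \lambda(ac^{-1}) + \eta - (ac^{-1})\mu$ as in \thmref{jg}, and right-multiplying each summand by $\eta^{-1}$, one gets
$$\tau'_0 = \tau_0\,\eta^{-1}, \qquad t'_0 = t_0\,\eta^{-1},$$
since $\eta\eta^{-1}=1$ turns the middle term $\eta$ into the constant $1$ while associativity lets $\eta^{-1}$ pass through the other two terms. The only point needing care is that $\eta^{-1}$ must be attached \emph{on the right}; because every summand in the definitions of $\tau'_0, t'_0$ is right-multiplied by $\eta^{-1}$, this matches, and multiplicativity of the quaternionic norm then yields $|\tau'_0| = |\tau_0|/|\eta|$ and $|t'_0| = |t_0|/|\eta|$.

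Next I would note the matching identity for the auxiliary quantity, namely
$$|\eta|^2\, S'(\lambda,\mu) = |\eta|^2 \cdot \frac{|\mu|}{|\eta|^2}\bigl(|\Im\lambda| + |\Im\mu|\bigr) = |\mu|\bigl(|\Im\lambda| + |\Im\mu|\bigr) = S(\lambda,\mu),$$
where $S(\lambda,\mu)$ is the quantity appearing in \thmref{jg}. In particular, the bound $4\sqrt2\,|\eta|^2 S'(\lambda,\mu)\le 1$ required for the radical to be real is exactly the hypothesis $S(\lambda,\mu)\le \tfrac{1}{4\sqrt2}$ of \thmref{jg}, so that theorem is applicable to the pair $\langle S, T\rangle$.

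Finally I would substitute these relations into the claimed inequality. The left side becomes $|c|\sqrt{|\tau'_0||t'_0|} = |c|\sqrt{|\tau_0||t_0|}\,/\,|\eta|$, while the right side becomes $\bigl(1+\sqrt{1-4\sqrt2\,S(\lambda,\mu)}\bigr)\big/(2|\eta|)$. Clearing the common positive factor $1/|\eta|$ reduces the assertion precisely to the conclusion of \thmref{jg}, which holds under the stated hypotheses. I expect no genuine obstacle: all the analytic content lives in \thmref{jg}, and the corollary merely restates that inequality after normalizing $\eta$ away, so the sole thing to verify is the bookkeeping of the right-hand factor $\eta^{-1}$ together with the cancellation of $|\eta|^2$ inside the square root.
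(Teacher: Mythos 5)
Your proposal is correct and is exactly the paper's argument: the paper's own proof simply writes $\tau_0=\tau'_0\eta$, $t_0=t'_0\eta$ and invokes Theorem~\ref{jg}, which is the same substitution you carry out (you merely make the norm bookkeeping and the identity $|\eta|^2S'(\lambda,\mu)=S(\lambda,\mu)$ explicit). Your added observation that the reality of the radical encodes the hypothesis $S(\lambda,\mu)\le\frac{1}{4\sqrt2}$ of Theorem~\ref{jg} is a sensible clarification of a point the corollary's statement leaves implicit.
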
 
\begin{proof}
If $\eta \neq 0$, we write $\tau_0= \tau'_0 \eta$ and $t_0=t'_0 \eta$.  Then the result follows from the inequality in \thmref{jg}.
\end{proof}

\begin{cor}\label{rez}
Suppose $S=\begin{pmatrix} a&b\\ c&d \end{pmatrix}$ and $T=\begin{pmatrix} \lambda& \eta \\ 0& \mu \end{pmatrix}$, where $\Re \lambda=\Re \mu=0$, $|\lambda|\leq 1\leq |\mu|$, generate a non-elementary discrete subgroup in $\S$.    Suppose \hbox{$S(\lambda , \mu) =  |\mu| (|\Im \lambda| + | \Im \mu| ) \leq  \frac{1}{4}$}.
Then we have
$$|c| \sqrt{|\tau_0| |t_0|} \geq  \frac{1+\sqrt{1- 4S(\lambda ,\mu)}}{2},$$
where  $\tau_0 = \lambda(- c^{-1}d) +\eta +(c^{-1}d)  \mu\;\hbox{and, }t_0 = \lambda(ac^{-1}) +\eta - (ac^{-1})  \mu$.
\end{cor}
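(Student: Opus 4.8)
The plan is to treat \corref{rez} as the degenerate case $\Re\lambda = \Re\mu = 0$ of \thmref{jg}, and to observe that under this hypothesis the one wasteful estimate in the proof of \thmref{jg} becomes an equality, which eliminates the factor $\sqrt 2$. Concretely, I would reproduce verbatim the Shimizu--Leutbecher construction $S_0 = S$, $S_{n+1} = S_n T S_n^{-1}$ of \thmref{jg}, together with the auxiliary quantities $\tau_n = \lambda(-c_n^{-1}d_n) + \eta + (c_n^{-1}d_n)\mu$ and $t_n = \lambda(a_nc_n^{-1}) + \eta - (a_nc_n^{-1})\mu$, and the recursion $c_{n+1} = -c_n\tau_n c\sptilde_n$, so that $|c_{n+1}| = |\tau_n c_n|\,|c_n|$. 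Writing $\alpha = \arg\lambda$ and $\beta = \arg\mu$, all the entrywise estimates of \thmref{jg} remain available unchanged.

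The single point of departure is the bound on the product $(r^2|\sin\alpha| + |\sin\beta|)(|\cos\alpha| + |\sin\alpha|)$ that controls $|\tau_{n+1}|$. In \thmref{jg} the second factor is estimated by $|\cos\alpha| + |\sin\alpha| \le \sqrt 2$, and this is the \emph{only} place the constant $\sqrt 2$ enters. Here, since $T$ is invertible we have $\lambda \neq 0$, so $\Re\lambda = |\lambda|\cos\alpha = 0$ forces $\cos\alpha = 0$, whence $|\sin\alpha| = 1$ and $|\cos\alpha| + |\sin\alpha| = 1$; likewise $\cos\beta = 0$. Substituting $1$ for $\sqrt 2$ throughout yields the sharpened recursive inequalities
$$|\tau_{n+1}c_{n+1}| \le |\tau_n c_n|\,|t_n c_n| + S(\lambda,\mu), \qquad |t_{n+1}c_{n+1}| \le |\tau_n c_n|\,|t_n c_n| + S(\lambda,\mu).$$

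With these in hand I would run the same auxiliary real sequence $x_0 = |c|\sqrt{|\tau_0||t_0|}$, $x_{n+1} = x_n^2 + S(\lambda,\mu)$, and verify by induction that it majorizes both $|\tau_n c_n|$ and $|t_n c_n|$. The map $x \mapsto x^2 + S(\lambda,\mu)$ has the two real fixed points $\tfrac{1\pm\sqrt{1-4S(\lambda,\mu)}}{2}$ precisely when $S(\lambda,\mu) \le \tfrac14$, which is exactly the hypothesis replacing $S(\lambda,\mu) \le \tfrac{1}{4\sqrt 2}$ of \thmref{jg}. Assuming for contradiction that $x_0 < \tfrac{1+\sqrt{1-4S(\lambda,\mu)}}{2}$, the sequence $\{x_n\}$ is monotonically decreasing and converges to the smaller fixed point $\tfrac{1-\sqrt{1-4S(\lambda,\mu)}}{2} < 1$; hence $|\tau_n c_n|$ is eventually bounded by a constant strictly below $1$, and $|c_{n+1}| = |\tau_n c_n|\,|c_n|$ forces $|c_n| \to 0$. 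As in \thmref{jg}, the remaining entries stay bounded along a subsequence, so $\{S_n\}$ subconverges; since $c \neq 0$ (the group being non-elementary), this contradicts discreteness, and the asserted inequality follows.

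The only genuine work, and the step I would watch most carefully, is confirming that setting $\cos\alpha = 0$ does not disturb any other estimate of \thmref{jg} — in particular the formulas for $a_{n+1}$ and $d_{n+1}$ carry a term $r\cos\alpha = \Re\lambda$ which now vanishes — and that the constant $\sqrt 2$ is produced solely by $|\cos\alpha| + |\sin\alpha|$ with no hidden second occurrence. Once this single substitution $\sqrt 2 \rightsquigarrow 1$ is justified, the fixed-point and monotonicity analysis is word-for-word that of \thmref{jg}.
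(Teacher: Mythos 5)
Your proposal is correct and follows essentially the same route as the paper: the paper's proof of this corollary simply reruns the argument of \thmref{jg}, noting that with $\Re\lambda=\Re\mu=0$ the factor $|\cos\alpha|+|\sin\alpha|\le\sqrt 2$ in the bound for $|\tau_{n+1}|$ is replaced by $|\sin\alpha|\le 1$, so $\sqrt 2\,S(\lambda,\mu)$ becomes $S(\lambda,\mu)$ and the fixed-point analysis goes through with threshold $\tfrac{1}{4}$. Your identification of this as the sole occurrence of the constant $\sqrt 2$, and your check that the vanishing of the $r\cos\alpha$ terms does no harm elsewhere, is exactly the content of the paper's (much terser) proof.
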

\begin{proof}
In this case, we proceed as in the proof of the previous theorem. The only difference from the previous proof is essentially the following bound:
\begin{eqnarray*}
 |\tau_{n+1}| &\leq& |t_n| + \frac{(r^2 |\sin \alpha| + |\sin \beta|) |\sin \alpha|}{|\tau_n {c_n}^2|}, \hbox{ using, } |a_n \sptilde|=|a_n||l_{22}^{-1}|,\\
& & |l_{22}^{-1}|=\det S_n =1, \hbox{ and, }~ |\Im \lambda|=|\lambda||\sin \alpha|,~ |\Im \mu|=|\mu||\sin \beta|\\ \\
\Rightarrow |\tau_{n+1}c_{n+1}| &\leq& |\tau_n c_n||t_n c_n| + S(\lambda , \mu), ~
\text{where, }\end{eqnarray*}
\begin{eqnarray*}
 S(\lambda , \mu) &=&  (|\sin \alpha| + |\mu|^2 |\sin \beta|)\\
& = & (\frac{|\Im \lambda|}{|\lambda|} + |\mu | |\Im \mu|)\\
&=&  |\mu|(|\Im \lambda| + |\Im \mu|).
\end{eqnarray*}
Noting this bound, the rest is similar.
\end{proof}
Given any parabolic transformation in $\S$, it is conjugate to a transformation of the form $$T=\begin{pmatrix} \lambda& 1\\ 0& \lambda\end{pmatrix}, ~ |\lambda|=1,$$ 
and moreover, one can choose $\Re(\lambda)=0$ up to conjugacy. Thus, using \corref{rez}, gives Waterman's result  \cite[Theorem 8]{waterman} in $\S$. 

\begin{cor}\label{wat}
If  $S=\begin{pmatrix} a&b\\ c&d \end{pmatrix},~T=\begin{pmatrix} \lambda& 1\\ 0& \lambda\end{pmatrix}$, $|\lambda|=1$ generates a non-elementary discrete subgroup in $\S$ with $T$  parabolic fixing $\infty$, then
$$|c| \sqrt{|T(ac^{-1}) - ac^{-1}|}\sqrt{|T(-c^{-1}d) - (-c^{-1}d)|} \geq \frac{1+\sqrt{1-8 |\Im \lambda|}}{2}.$$
\end{cor}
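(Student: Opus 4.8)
The plan is to deduce the statement from \corref{rez} by specializing to $\mu=\lambda$ and $\eta=1$, which is exactly the shape of the parabolic generator $T=\begin{pmatrix}\lambda&1\\0&\lambda\end{pmatrix}$. As recalled in the paragraph preceding the statement, a parabolic fixing $\infty$ may be put in this form with $|\lambda|=1$ and, after a further conjugation, with $\Re\lambda=0$; since $|\Im\lambda|$ is a conjugacy invariant and the left-hand side is built from the $T$-action, it is enough to argue in this normalized situation. With $\mu=\lambda$ we then have $|\mu|=|\lambda|=1$, $|\Im\mu|=|\Im\lambda|$, and the hypotheses $\Re\lambda=\Re\mu=0$, $|\lambda|\le 1\le|\mu|$ of \corref{rez} all hold.

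First I would rewrite the two factors on the left-hand side in terms of $\tau_0$ and $t_0$. Expanding the M\"obius action gives $T(Z)=(\lambda Z+1)\lambda^{-1}=\lambda Z\lambda^{-1}+\lambda^{-1}$, so $T(Z)-Z=\lambda Z\lambda^{-1}+\lambda^{-1}-Z$. Right-multiplying by $\lambda$ and using $|\lambda|=1$ (property (3) of the quaternions) yields
$$|T(Z)-Z|=|(T(Z)-Z)\lambda|=|\lambda Z+1-Z\lambda|.$$
Setting $Z=ac^{-1}$ reproduces $|t_0|$ and setting $Z=-c^{-1}d$ reproduces $|\tau_0|$, both with $\eta=1$, $\mu=\lambda$. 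Hence the left-hand side of the corollary is exactly $|c|\sqrt{|\tau_0||t_0|}$, the left-hand side of \corref{rez}.

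It then remains only to match the constants. Substituting $\mu=\lambda$ into $S(\lambda,\mu)=|\mu|(|\Im\lambda|+|\Im\mu|)$ and using $|\mu|=1$, $|\Im\mu|=|\Im\lambda|$ gives $S(\lambda,\lambda)=2|\Im\lambda|$, so $4S(\lambda,\lambda)=8|\Im\lambda|$ and the bound $\tfrac{1+\sqrt{1-4S}}{2}$ of \corref{rez} becomes $\tfrac{1+\sqrt{1-8|\Im\lambda|}}{2}$, as claimed; the admissibility hypothesis $S\le\tfrac14$ becomes $|\Im\lambda|\le\tfrac18$, precisely the range in which the square root is real.

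I expect the delicate bookkeeping to be in the second step: verifying that right-multiplication by $\lambda$ carries $T(ac^{-1})-ac^{-1}$ and $T(-c^{-1}d)-(-c^{-1}d)$ to $t_0$ and $\tau_0$ on the nose — the noncommutativity makes the placement of the $\lambda$'s the thing to watch — together with confirming that the conjugation used to normalize $\Re\lambda=0$ fixes both sides of the inequality (the latter in the spirit of the diagonal-conjugation invariance argument used in the proof of \corref{jss2}). Once these are in hand, the result is a direct specialization of \corref{rez}.
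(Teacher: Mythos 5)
Your proposal matches the paper's own proof: the paper likewise deduces the statement from \corref{rez} with $\eta=1$, $\mu=\lambda$, by computing $T(ac^{-1})-ac^{-1}=(\lambda(ac^{-1})+1-(ac^{-1})\lambda)\lambda^{-1}=t_0\lambda^{-1}$ and $T(-c^{-1}d)-(-c^{-1}d)=\tau_0\lambda^{-1}$, then using $|\lambda|=1$ and $4S(\lambda,\lambda)=8|\Im\lambda|$. The normalization $\Re\lambda=0$ that you flag as the delicate point is exactly the assumption the paper invokes in the paragraph immediately preceding the corollary, so your argument is essentially identical to the one given there.
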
 
\begin{proof}
Note that $T(a c^{-1})=(\lambda (a c^{-1})+ 1)\lambda^{-1}$ and
$T(-c^{-1} d)=(\lambda (-c^{-1} d) + 1)\lambda^{-1}$. Now,
$T(ac^{-1})-(ac^{-1})=(\lambda (ac^{-1})+ 1 -(ac^{-1}) \lambda) \lambda^{-1}=t_0 \lambda^{-1}$. Similarly,
$ T(-c^{-1}d) - (-c^{-1}d)=\tau_o \lambda^{-1}$. Since $|\lambda|=1$, the result follows.
\end{proof}

Recently, Erlandsson and Zakeri \cite{ez}  have proved  a more geometric version of  \thmref{jg}. 
Their geometric inequality does not depend on any quantity like $S(\lambda, \mu)$. Also, in the asymptotic case, it covers some of the two-generators groups whose discreteness remain inconclusive by \corref{wat}. However, the inequality of Erlandsson and Zakeri  does not involve the algebraic coefficients of the matrices. In that sense, the theorems in this paper give a more explicit  algorithm involving the matrix coefficients to test discreteness.

\medskip Using similar argument as in the proof of \thmref{jg}, we can also prove the following theorem that  gives J\o{}rgensen inequality for a two-generator subgroup where one of the generators has a fixed point $0$.

\begin{theorem}\label{jlt}
Suppose $S=\begin{pmatrix} a&b\\ c&d \end{pmatrix}$ and $T=\begin{pmatrix} \lambda& 0 \\ \eta& \mu \end{pmatrix}$, where $\Re \lambda= \Re \mu =\kappa$,  $|\lambda|\leq 1\leq |\mu|$, generate a non-elementary discrete subgroup in $\S$. Suppose,   \hbox{$ S(\lambda , \mu) = |\mu|  (|\Im \lambda| + | \Im \mu| ) \leq \epsilon$}. Then
$$|c| \sqrt{|\tau_0| |t_0|} \geq  \frac{1+\sqrt{1-\epsilon^{-1} S(\lambda ,\mu)}}{2},$$
where  $\tau_0 = \mu (- b^{-1} a) +\eta +(b^{-1} a) \lambda ,\;t_0 = \mu (d b^{-1}) +\eta - (d b^{-1}) \lambda$ and $\epsilon=\frac{1}{4 \sqrt 2}$ or $\frac{1}{4}$ depending upon $\kappa \neq 0$ or $\kappa=0$. 
\end{theorem}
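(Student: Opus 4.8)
The plan is to reduce to \thmref{jg} and then run the same Shimizu--Leutbecher argument. Conceptually, a matrix $T$ fixing the boundary point $0$ is carried to one fixing $\infty$ by the anti-diagonal involution $J=\left(\begin{smallmatrix}0&1\\1&0\end{smallmatrix}\right)\in\S$, which swaps $0\leftrightarrow\infty$, replaces $S=\left(\begin{smallmatrix}a&b\\c&d\end{smallmatrix}\right)$ by $\left(\begin{smallmatrix}d&c\\b&a\end{smallmatrix}\right)$, and turns the lower-triangular $T$ into the upper-triangular matrix $\left(\begin{smallmatrix}\mu&\eta\\0&\lambda\end{smallmatrix}\right)$. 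This explains why the quantities $\tau_0,t_0$ here are built from $b^{-1}a$ and $db^{-1}$ in place of the $c^{-1}d$ and $ac^{-1}$ of \thmref{jg}: they are exactly the image of the latter under $J$. Since this conjugation also interchanges the two eigenvalue moduli, I keep the hypothesis $|\lambda|\le 1\le|\mu|$ in force and carry out the recursion directly, tracking the off-diagonal entry of $S_n$ attached to the fixed point of $T$.

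First I would assume the asserted inequality fails, set $r=|\lambda|\le 1$, $\alpha=\arg\lambda$, $\beta=\arg\mu$, so that $|\mu|=r^{-1}$ and the assumption $\Re\lambda=\Re\mu=\kappa$ gives $r^2\cos\alpha=\cos\beta$. With $S_0=S$, $S_{n+1}=S_nTS_n^{-1}$, I expand $S_{n+1}$ using the inversion formula for $S_n^{-1}$ and the Kellerhals identities ($a\sptilde_nc_n=c\sptilde_na_n$, $c_nd\sptilde_n=d_nc\sptilde_n$, and their companions). Splitting $\lambda=\Re\lambda+\Im\lambda$, $\mu=\Re\mu+\Im\mu$ and using $\Re\lambda=\Re\mu$ to kill the real contributions, the governing off-diagonal entry collapses to a single product $c_{n+1}=-c_n\tau_n c\sptilde_n$, where
$$\tau_n=\mu(-b_n^{-1}a_n)+\eta+(b_n^{-1}a_n)\lambda,\qquad t_n=\mu(d_nb_n^{-1})+\eta-(d_nb_n^{-1})\lambda,$$
whose values at $n=0$ are precisely the $\tau_0,t_0$ of the statement. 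Since $\det S_n=1$ forces $|c\sptilde_n|=|c_n|$, this gives $|c_{n+1}|=|\tau_nc_n|\,|c_n|$. A parallel computation of $\tau_{n+1},t_{n+1}$ then produces the two symmetric estimates
$$|\tau_{n+1}c_{n+1}|\le|\tau_nc_n|\,|t_nc_n|+\epsilon_0\,S(\lambda,\mu),\qquad |t_{n+1}c_{n+1}|\le|\tau_nc_n|\,|t_nc_n|+\epsilon_0\,S(\lambda,\mu),$$
with $\epsilon_0=\tfrac{1}{4\epsilon}$. Setting $x_0=|c|\sqrt{|\tau_0||t_0|}$ and $x_{n+1}=x_n^2+\epsilon_0 S(\lambda,\mu)$, the map $x\mapsto x^2+\epsilon_0 S(\lambda,\mu)$ has fixed points $\tfrac{1\pm\sqrt{1-\epsilon^{-1}S(\lambda,\mu)}}{2}$ (real because $S(\lambda,\mu)\le\epsilon$); if $x_0<\tfrac{1+\sqrt{1-\epsilon^{-1}S(\lambda,\mu)}}{2}$ then $\{x_n\}$ decreases monotonically to the smaller root $<1$, whence $|\tau_nc_n|,|t_nc_n|\le x_n$ eventually drop below $1$ and $|c_{n+1}|=|\tau_nc_n|\,|c_n|\to 0$. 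Bounding $|a_{n+1}|,|b_{n+1}|,|d_{n+1}|$ in the same way (e.g. $|a_{n+1}|\le|\tau_nc_n|\,|a_n|+\tfrac{2}{r}$) yields a convergent subsequence of $\{S_n\}\subset\langle S,T\rangle$, contradicting discreteness and non-elementarity; this proves the inequality.

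The two values of the constant come from the cross terms in $\tau_{n+1}c_{n+1}$, which are controlled by $(r^2|\sin\alpha|+|\sin\beta|)$ times a factor coming from $a\sptilde_n$ (with $|a\sptilde_n|=|a_n|$ and $|l_{22}^{-1}|=\det S_n=1$). Using $r^2|\sin\alpha|+|\sin\beta|=r^2S(\lambda,\mu)\le S(\lambda,\mu)$ together with $|\cos\alpha|+|\sin\alpha|\le\sqrt2$ gives $\epsilon_0=\sqrt2$ (hence $\epsilon=\tfrac1{4\sqrt2}$) when $\kappa\ne 0$; when $\kappa=0$ one has $\cos\alpha=0$, the cosine factor drops out, and the sharper bound $|\sin\alpha|\le 1$ yields $\epsilon_0=1$ (hence $\epsilon=\tfrac14$), recovering the constant of \corref{rez}. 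The main obstacle is the non-commutative bookkeeping: one must verify that the chosen off-diagonal entry — the one attached to the fixed point $0$ of $T$, which fixes the prefactor in the final bound — really does factor as the clean product $-c_n\tau_n c\sptilde_n$ after the real parts cancel, and that the residual cross terms are genuinely dominated by $\epsilon_0\,S(\lambda,\mu)$ with $S(\lambda,\mu)=|\mu|(|\Im\lambda|+|\Im\mu|)$. Everything else is a faithful transcription of the proof of \thmref{jg}.
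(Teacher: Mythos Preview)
Your overall strategy is exactly the paper's: it does not give a separate proof of \thmref{jlt} but simply says ``using similar argument as in the proof of \thmref{jg}'', i.e.\ run the Shimizu--Leutbecher recursion with the roles swapped by the involution $J=\left(\begin{smallmatrix}0&1\\1&0\end{smallmatrix}\right)$. So in spirit you and the paper agree.

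There is, however, a genuine bookkeeping error in your execution. You claim the governing recursion is $c_{n+1}=-c_n\tau_n c\sptilde_n$ with $\tau_n=\mu(-b_n^{-1}a_n)+\eta+(b_n^{-1}a_n)\lambda$, but a direct expansion of $S_nTS_n^{-1}$ with the \emph{lower}-triangular $T$ gives
\[
c_{n+1}=c_n\lambda d\sptilde_n+d_n\eta d\sptilde_n-d_n\mu c\sptilde_n,
\]
which involves $c_n^{-1}d_n$, not $b_n^{-1}a_n$, and does not collapse to $-c_n\tau_n c\sptilde_n$. The entry that \emph{does} factor cleanly is $b_{n+1}$: using $a_nb\sptilde_n=b_na\sptilde_n$ (hence $a\sptilde_n b\sptilde_n^{-1}=b_n^{-1}a_n$) one finds
\[
b_{n+1}=-a_n\lambda b\sptilde_n-b_n\eta b\sptilde_n+b_n\mu a\sptilde_n=-b_n\,\tau_n\, b\sptilde_n,
\]
so that $|b_{n+1}|=|\tau_n b_n|\,|b_n|$. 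This is exactly what your own conjugation argument predicts: under $S\mapsto JSJ$ the $(2,1)$ entry becomes $b$, so the role of $c_n$ in \thmref{jg} is played here by $b_n$, not $c_n$. Consequently the auxiliary estimates you quote (e.g.\ $|a_{n+1}|\le|\tau_nc_n|\,|a_n|+2/r$) and the final inequality must be rewritten with $b$ in place of $c$; the stated bound should read $|b|\sqrt{|\tau_0||t_0|}\ge\tfrac{1+\sqrt{1-\epsilon^{-1}S(\lambda,\mu)}}{2}$. (The appearance of $|c|$ in the paper's statement is almost certainly a typographical slip carried over from \thmref{jg}; your conjugation paragraph already contains the correct diagnosis.) Once you track $b_n$ instead of $c_n$, the rest of your argument---the quadratic recursion $x_{n+1}=x_n^2+\epsilon_0 S(\lambda,\mu)$ and the dichotomy on $\kappa$ giving $\epsilon_0=\sqrt2$ or $1$---goes through verbatim.
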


\section{Extremality of J\o{}rgensen Inequality} \label{sext}
The following theorem generalizes Theorem-1 of J\o{}rgensen-Kikka \cite{jk}.
\begin{theorem}\label{ext1}
Let $ S=\begin{pmatrix}a&b\\c&d\end{pmatrix}$ and $T=\begin{pmatrix}{\lambda}&0\\0&{\mu}\end{pmatrix} \in\S$. Suppose, $\langle S, T \rangle$ is discrete, non-elementary and  for $\alpha=arg(\lambda),~\beta=arg(\mu)$,  $\tau=2 \log |\lambda|$,
$$\{(\Re\lambda -\Re\mu)^2 +(|\Im \lambda|+|\Im\mu|)^2\}(1+|bc|)= 1.$$
We consider the Shimizu-Leutbechar sequence
$$S_0=S, \hspace{.5 cm} S_{n+1}= S_n T S_n^{-1}.$$
Then $T$ and $S_{n+1}=S_n T S_n^{-1}$ generate a non-elementary discrete group and
$$\{(\Re\lambda -\Re\mu)^2 +(|\Im \lambda|+|\Im\mu|)^2\}(1+|b_n c_n|)= 1.$$
\end{theorem}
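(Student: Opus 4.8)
The plan is to show that the extremal identity is a fixed point of the underlying recursion and that discreteness forbids the sequence from escaping below it, so that equality is \emph{trapped} and must persist at every stage. Write $x_n := |b_n c_n|$ and $t := |bc| = x_0$, and set $K = (\Re\lambda-\Re\mu)^2+(|\Im\lambda|+|\Im\mu|)^2$, so that the hypothesis reads $K(1+t)=1$. Since $\langle S,T\rangle$ is non-elementary and $T$ is diagonal, we have $bc\neq 0$ (otherwise $S$ and $T$ would share a fixed point among $\{0,\infty\}$, exactly as used in the proof of \corref{jh}); hence $t>0$ and $K=1/(1+t)\in(0,1)$. The engine of the whole argument is the inequality \eqref{ine1} established inside the proof of \thmref{jss}, namely $x_{n+1}\le K(1+x_n)x_n =: f(x_n)$, which holds for the Shimizu--Leutbecher sequence of \emph{any} pair $(S,T)$, extremal or not. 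I record that $f$ is strictly increasing on $[0,\infty)$, that $f(t)=t$, and that $f(x)<x$ for $0<x<t$ (because $f(x)<x\iff K(1+x)<1\iff x<t$).

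First I would establish the lower bound $x_n\ge t$ for every $n$. Suppose instead $x_N<t$ for some $N$. Then $K(1+x_N)<K(1+t)=1$, and since $f(x)<x$ on $(0,t)$ the tail $(x_n)_{n\ge N}$ is strictly decreasing as long as it stays positive, hence converges to some $\ell\in[0,t)$; passing to the limit in $x_{n+1}\le f(x_n)$ gives $\ell\le K(1+\ell)\ell$, so $1\le K(1+\ell)$ if $\ell>0$, i.e. $\ell\ge t$, a contradiction. Thus $\ell=0$ and $x_n\to 0$. But then, exactly as in the closing paragraph of the proof of \thmref{jss}, $b_n\to0$ and $c_n\to 0$ while $|a_n|,|d_n|$ remain bounded, so the $S_n$ accumulate; discreteness of $\langle S,T\rangle$ forces the $S_n$ to be eventually constant, which is incompatible with $x_n$ strictly decreasing to $0$. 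This contradiction yields $x_n\ge t$ for all $n$.

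The equality now follows by a one-line induction. We have $x_0=t$; and if $x_n=t$, then the recursion gives $x_{n+1}\le f(x_n)=f(t)=K(1+t)t=t$, while the lower bound gives $x_{n+1}\ge t$, whence $x_{n+1}=t$. Therefore $x_n=|b_nc_n|=t$ for all $n$, and multiplying by $K$ gives $K(1+|b_nc_n|)=K(1+t)=1$, which is precisely the asserted extremal identity at every stage. In particular $b_n,c_n\neq0$ for all $n$. For the discreteness and non-elementariness of the pair $(T,S_{n+1})$: discreteness is automatic, as $\langle T,S_{n+1}\rangle\le\langle S,T\rangle$; and since $|b_{n+1}c_{n+1}|=t>0$, the generators cannot share a fixed point among $\{0,\infty\}$, so $\langle T,S_{n+1}\rangle$ is non-elementary (equivalently, $S_{n+1}=S_nTS_n^{-1}$ is loxodromic with fixed-point set $\{S_n(0),S_n(\infty)\}\subset\hat\H$ disjoint from the fixed-point set $\{0,\infty\}$ of $T$).

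The step I expect to be the main obstacle is the lower bound $x_n\ge t$. The tempting shortcut—invoking \thmref{jss} for the pair $\langle S_n,T\rangle$ to get $K(1+|b_nc_n|)\ge1$—is circular, since the non-elementariness of $\langle S_n,T\rangle$ is part of what we are trying to prove, not a hypothesis we may assume. One is therefore forced to extract $x_n\ge t$ directly from a Shimizu--Leutbecher contradiction \emph{inside} the ambient discrete group, as above, and to make the accumulation argument genuinely produce distinct group elements; this is where discreteness, rather than the algebra of a single matrix, does the real work, and where one must be careful about the degenerate possibility that some $x_m$ collapses to $0$ in finitely many steps.
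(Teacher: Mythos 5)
Your argument is correct and runs on the same engine as the paper's proof: the recursion $w_{n+1}\le K w_n(1+w_n)$ from \eqnref{ine1}, the observation that $K(1+w_0)=1$ with $w_0=|bc|\neq 0$ forces $K<1$ and makes $t=|bc|$ a fixed point of $x\mapsto Kx(1+x)$, and the sandwich $t\le w_{n+1}\le Kw_n(1+w_n)=t$ that propagates the equality to all $n$. The genuine difference is where the lower bound $w_n\ge t$ comes from. The paper simply writes $1\le K(1+w_1)$, i.e.\ it invokes the inequality of \thmref{jss} for the conjugated pair $(S_1,T)$; as you note, this presupposes that $\langle S_1,T\rangle$ is non-elementary, which is part of what is being proved, so the paper's chain of inequalities is as written circular (or at least incomplete: one would have to dispose of the case where $\langle S_n,T\rangle$ is elementary separately, as J\o{}rgensen and Kikka do in the classical setting). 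You instead extract the lower bound directly: if some $w_N<t$, the tail is trapped below the fixed point and decreases to $0$, and re-running the Shimizu--Leutbecher convergence argument of \thmref{jss} \emph{inside} the ambient discrete group $\langle S,T\rangle$ produces an accumulation of distinct elements, contradicting discreteness. This buys a self-contained proof at the cost of redoing the convergence analysis and of having to confront the degenerate possibility that some $w_m=0$ (i.e.\ $S_m$ becomes triangular), which you flag but do not fully close; the paper is silent on exactly the same degenerate case, and both arguments also share the paper's implicit convention that the absence of a common fixed point in $\{0,\infty\}$ already yields non-elementariness of $\langle S_{n+1},T\rangle$ (a point that is delicate when $T$ is elliptic, which by \corref{extc1} is the only extremal case). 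In short: same skeleton and same conclusion, but your justification of the crucial lower bound is different from, and more careful than, the one in the paper.
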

\begin{proof}
We consider the Shimizu-Leutbechar sequence
$$S_0=S, \hspace{.5 cm} S_{n+1}= S_n T S_n^{-1}.$$
 From relation (3.1) we get
\begin{align*}
a_{n+1}&=a_n\lambda d\sptilde_n-b_n\mu c\sptilde_n,& b_{n+1}&=-a_n\lambda b\sptilde_n+b_n\mu a\sptilde_n\\
c_{n+1}&=c_n\lambda d\sptilde_n-d_n\mu c\sptilde_n,& d_{n+1}&=-c_n\lambda b\sptilde_n+d_n\mu a\sptilde_n
\end{align*}\\
and we also have,  \begin{eqnarray*}
|b_{n+1}||c_{n+1}|&=&|(-a_n\lambda b\sptilde_n+b_n\mu a\sptilde_n)(c_n\lambda d\sptilde_n-d_n\mu c\sptilde_n)|\\
&=&|a_nb_nc_nd_n||\lambda-a_n^{-1}b_n\mu a\sptilde_n{b\sptilde_n}^{-1}||\lambda-c_n^{-1}d_n\mu c\sptilde_n{d\sptilde_n}^{-1}|.
\end{eqnarray*}
This implies, (see \eqnref{ine1} in the proof of \thmref{jss})
\begin{equation}\label{ee1} |b_{n+1}c_{n+1}| \leq \{(\Re\lambda -\Re\mu)^2 +(|\Im \lambda|+|\Im\mu|)^2\} (1+|b_n c_n|)
| b_n c_n|. \end{equation}
Let
\begin{equation}\label{k} K=(\Re\lambda -\Re\mu)^2 +(|\Im \lambda|+|\Im\mu|)^2. \end{equation}
 Construct the sequence $w_n$ where
$$w_0=|bc|, \hspace{.2in} w_{n}=|b_n c_n|.$$
It follows from \eqnref{ee1} that $w_{n+1} \leq K w_n (1+w_n)$. Now note that $K(1+w_0)=1$. Now $w_0 \neq 0$, for otherwise, $S$ and $T$ will have a common fixed point.  Hence $K<1$.

Observe that
$$1 \leq K(1+w_1) \leq K(1+Kw_0(1+w_0)) \leq K(1+w_0)=1,$$
and hence $K(1+w_1)=1$. By induction it follows that $K(1+w_n)=1$ for all $n \geq 0$.
Since $K<1$, it follow that $w_n \neq 0$ for all $n$ and hence the result follows.
\end{proof}
The following corollary generalizes Theorem-2 of J\o{}rgensen and Kikka \cite{jk}.
\begin{cor}\label{extc1}
Let $ S=\begin{pmatrix}a&b\\c&d\end{pmatrix}$ and $T=\begin{pmatrix}{\lambda}&0\\0&{\mu}\end{pmatrix} \in\S$. If $\langle S, T \rangle$ is discrete, non-elementary and
$$\{(\Re\lambda -\Re\mu)^2 +(|\Im \lambda|+|\Im\mu|)^2\}(1+|bc|)=1,$$
then $T$ is elliptic of order at least seven.
\end{cor}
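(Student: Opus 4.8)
The plan is to run the extremality machinery of \thmref{ext1}, which already shows that the equality propagates along the Shimizu--Leutbecher sequence $S_{n+1}=S_nTS_n^{-1}$: one has $K(1+|b_nc_n|)=1$ for all $n$, where $K=(\Re\lambda-\Re\mu)^2+(|\Im\lambda|+|\Im\mu|)^2$. Since $\langle S,T\rangle$ is non-elementary we have $bc\neq0$, so $K<1$ strictly and $|b_nc_n|=1/K-1>0$ is a positive constant independent of $n$. Moreover each $S_n$ with $n\geq1$ is a conjugate of $T$, hence of the same type and with the same conjugacy invariants. The proof then has two parts: first that $T$ is elliptic, and then that its order is at least seven.

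For the first part, a diagonal matrix in $\S$ is semisimple, so $T$ is certainly not parabolic, and it remains to rule out the loxodromic case. This is the main obstacle, and I would handle it by the dynamical argument underlying J\o{}rgensen--Kikka rather than by the convergence used in \thmref{jss}: because $|b_nc_n|$ is a positive \emph{constant}, the entries do not decay and one cannot argue $b_n,c_n\to0$. Instead, if $T$ were loxodromic the iterates $\{S_n\}_{n\geq1}$ would be an infinite family of loxodromics all conjugate to $T$, with the same translation length and multiplier and the bounded invariant $|b_nc_n|\equiv 1/K-1$. Discreteness forbids an accumulation point among distinct elements, which forces the orbit to close up, i.e.\ $S_{N+k}=S_N$ for some $N\geq1$, $k\geq1$. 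Feeding this relation back into the recursion forces the fixed-point pairs of these loxodromic conjugates to coincide, producing a common axis for $T$ and a conjugate of $S$; this makes $\langle S,T\rangle$ elementary, a contradiction. Hence $T$ is elliptic, and an elliptic element of a discrete group has finite order.

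For the order bound, conjugating $\langle S,T\rangle$ by a diagonal element of $\S$ (which preserves discreteness, non-elementarity, $K$, and $|bc|$, as in the proof of \corref{jss2}) we may assume $\lambda=e^{i\alpha}$, $\mu=e^{i\beta}$ with $\alpha=\arg\lambda$, $\beta=\arg\mu\in[0,\pi]$. A direct expansion gives
$$K=(\cos\alpha-\cos\beta)^2+(\sin\alpha+\sin\beta)^2=2-2\cos(\alpha+\beta)=4\sin^2\!\Big(\tfrac{\alpha+\beta}{2}\Big),$$
so $K$ depends only on $\arg(\lambda\mu)=\alpha+\beta$. Writing $Z=z+wj$ with $z,w\in\C$ and using $ja=\bar a\,j$ for $a\in\C$, one checks that $T$ acts on the boundary by $(z,w)\mapsto(e^{i(\alpha-\beta)}z,\;e^{i(\alpha+\beta)}w)$, so the order of $T$ equals $\mathrm{lcm}$ of the orders of $e^{i(\alpha-\beta)}$ and $e^{i(\alpha+\beta)}$; in particular it is a multiple of the order $p$ of $e^{i(\alpha+\beta)}$. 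Finiteness of the order makes $e^{i(\alpha+\beta)}$ a root of unity, say $\alpha+\beta=2\pi s/p$ with $\gcd(s,p)=1$, and then $K=4\sin^2(\pi s/p)$. Extremality gives $K<1$, that is $|\sin(\pi s/p)|<\tfrac12$. For every $p\leq6$ and every $s$ coprime to $p$ one has $|\sin(\pi s/p)|\geq\sin(\pi/p)\geq\sin(\pi/6)=\tfrac12$, hence $K\geq1$; for $p\leq5$ this contradicts $K<1$ outright, and for $p=6$ it gives $K=1$, forcing $|bc|=0$ and again contradicting non-elementarity. Therefore $p\geq7$, and the order of $T$ is at least seven.

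As noted, the delicate step is the exclusion of the loxodromic case in the second paragraph: one must extract a contradiction purely from the long-term dynamics of the conjugates $S_n$ under discreteness, since the quantitative decay available in the strict inequality is absent at equality. Once $T$ is known to be elliptic of finite order, the order estimate is an elementary trigonometric computation.
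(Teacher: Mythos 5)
Your reduction to two subclaims is reasonable, and your order estimate is correct and in fact more carefully argued than the paper's: you identify both rotation angles $\alpha\pm\beta$ through the boundary action $(z,w)\mapsto(e^{i(\alpha-\beta)}z,\,e^{i(\alpha+\beta)}w)$ and handle coprimality, whereas the paper only observes that $K<1$ forces $0<\alpha+\beta<\pi/3$ and concludes directly. The genuine problem is the exclusion of the loxodromic case, which you yourself flag as the delicate step and then only sketch. The chain ``discreteness forbids an accumulation point among distinct elements, which forces the orbit to close up'' is not valid: a sequence of distinct elements of a discrete group need not accumulate anywhere, since it can simply leave every compact set, and the only quantitative control you have extracted from \thmref{ext1} is that $|b_nc_n|=1/K-1$ is constant, which bounds nothing --- the individual entries $a_n,b_n,c_n,d_n$ can still diverge. (The convergence $|a_n|\to|\lambda|$, $b_n\to 0$, $c_n\to 0$ in the proof of \thmref{jss} was deduced from $b_nc_n\to 0$, which is precisely what fails at equality.) So you cannot conclude $S_{N+k}=S_N$; and even granting periodicity, the assertion that it forces a common axis for $T$ and a conjugate of $S$ is stated without any argument. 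As written, the loxodromic case is not ruled out.

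For comparison, the paper takes a completely different, purely computational route at this step: writing $|\lambda|=e^{\tau/2}$ it asserts that a hyperbolic $T$ gives $K=e^{\tau}+e^{-\tau}-2\cos(\alpha+\beta)\geq(e^{\tau/2}+e^{-\tau/2})^2>1$, contradicting $K<1$; no dynamics are needed. You should note, however, that this printed inequality uses $-2\cos(\alpha+\beta)\geq 2$, which only holds when $\cos(\alpha+\beta)=-1$; the bound valid for all angles is $K\geq(e^{\tau/2}-e^{-\tau/2})^2$, which can be less than $1$ for small $\tau$. So a dynamical argument of the J\o{}rgensen--Kikka type that you are gesturing at may well be what is actually required here --- but it has to be carried out (e.g.\ by proving convergence of the fixed points of the $S_n$ to the fixed-point set of $T$), and your sketch does not do that.
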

\begin{proof}
If possible suppose $T$ is hyperbolic.  As in the above proof, it follows from the extremal relation that $K<1$.
Now, Let $\arg \lambda=\alpha$ and $\arg \mu=\beta$. Then
\begin{eqnarray*}
K&=& (\Re\lambda -\Re\mu)^2 +(|\Im \lambda|+|\Im\mu|)^2\\
&=& |\lambda|^2 + |\mu|^2 +2 (|\Im \lambda| | \Im \mu|-\Re \lambda \Re \mu)\\
&=& |\lambda|^2 + |\mu|^2 + 2 |\sin \alpha| |\sin \beta|-\cos \alpha \cos \beta\\
&=& |\lambda|^2 + |\mu|^2-2\cos(\alpha+  \beta).
\end{eqnarray*}
Let $|\lambda|=e^{\frac{\tau}{2}}$. Then using $\cosh(\tau)=\frac{e^{\tau} + e^{-\tau} }{2}$, observe that
\begin{eqnarray*}
K & = & e^{\tau} + e^{-\tau} - 2 \cos(\alpha+ \beta)\\
&\geq&  e^{\tau} + e^{-\tau}+2=(e^{\frac{\tau}{2} }+e^{-\frac{\tau}{2} })^2.
\end{eqnarray*}
Since $e^{\frac{\tau}{2} }+e^{-\frac{\tau}{2} }>1$, this implies $K>1$.  This is a contradiction. Hence $T$ must be elliptic.

Since $T$ is elliptic, $\tau=0$.  Now, $K=1$ implies, $\cos(\alpha+\beta)>\frac{1}{2}$. Thus $0<\alpha+\beta<\frac{\pi}{3}$. This implies that the order of $T$ must be at least seven.

This completes the proof.
\end{proof}
\begin{cor}\label{extcc2}
Let $ S=\begin{pmatrix}a&b\\c&d\end{pmatrix}$ and $T=\begin{pmatrix}{\lambda}&0\\0&{\mu}\end{pmatrix} \in\S$. Suppose, $\langle S, T \rangle$ is discrete, non-elementary and    
$$\beta(T)(1+|bc|) = 1,$$
then $T$ is elliptic of order at least seven.
\end{cor}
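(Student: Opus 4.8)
The plan is to reduce the statement to \corref{extc1}, whose extremal hypothesis is phrased in terms of the quantity
$$K=(\Re\lambda -\Re\mu)^2 +(|\Im \lambda|+|\Im\mu|)^2$$
rather than $\beta(T)$. The entire content of the corollary will then follow once I establish, for a diagonal $T$, the exact identity $\beta(T)=K$ — a purely algebraic fact independent of $S$ and of discreteness. Indeed, granting this, the hypothesis $\beta(T)(1+|bc|)=1$ is literally the extremal equation $K(1+|bc|)=1$ of \corref{extc1}, and that corollary already delivers the conclusion that $T$ is elliptic of order at least seven.

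To identify $\beta(T)$ with $K$, I would first use multiplicativity of the quaternionic norm together with the independence of the two suprema to write $\beta(T)=\big(\sup_{e\neq 0,\infty}|\lambda-e\mu e^{-1}|\big)^2$. As $e$ ranges over the nonzero quaternions, $e\mu e^{-1}$ ranges over the full similarity class of $\mu$, which is the $2$-sphere $\{q\in\H:\Re q=\Re\mu,\ |\Im q|=|\Im\mu|\}$. Since $\lambda$ lies at distance $|\Im\lambda|$ from the real axis while this sphere is centred on the real axis at $\Re\mu$ with radius $|\Im\mu|$, the farthest point of the sphere from $\lambda$ is the antipodal configuration, giving $\sup_e|\lambda-e\mu e^{-1}|=\sqrt{(\Re\lambda-\Re\mu)^2+(|\Im\lambda|+|\Im\mu|)^2}=\sqrt K$, whence $\beta(T)=K$. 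The inequality $\beta(T)\ge K$ is exactly what was recorded in the proof of \corref{jss2}, via the choice $e=f=j$ and the relation $j\mu j^{-1}=\overline{\mu}$ for the complex entry $\mu$; the spherical computation above supplies the reverse inequality.

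With $\beta(T)=K$ in hand, the hypothesis becomes $K(1+|bc|)=1$, and I would simply invoke \corref{extc1} to conclude that $T$ is elliptic of order at least seven. I expect no serious obstacle: the only substantive new step is the supremum computation, which is a one-line spherical-geometry argument, while all of the delicate work — ruling out the hyperbolic case and extracting the order bound from the constraint $\cos(\alpha+\beta)>\tfrac12$ — is already carried out inside \corref{extc1}. Alternatively, one can avoid computing the supremum exactly and instead squeeze: since $\langle S,T\rangle$ is discrete and non-elementary, \thmref{jss} gives $K(1+|bc|)\ge 1$, and combining this with $\beta(T)\ge K$ and the hypothesis yields $1=\beta(T)(1+|bc|)\ge K(1+|bc|)\ge 1$, again forcing $K(1+|bc|)=1$ and reducing the claim to \corref{extc1}.
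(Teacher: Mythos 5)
Your proposal is correct and takes essentially the same route as the paper: reduce to \corref{extc1} by showing the hypothesis forces $K(1+|bc|)=1$, and indeed the ``alternative'' squeeze you give at the end ($1=\beta(T)(1+|bc|)\geq K(1+|bc|)\geq 1$ via $K\leq\beta(T)$ and discreteness) is verbatim the paper's argument. Your primary variant, computing $\beta(T)=K$ exactly by maximizing the distance from $\lambda$ to the similarity sphere of $\mu$, is a valid and slightly stronger way to obtain the same reduction, but it is not needed for the conclusion.
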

\begin{proof}
Suppose, up to conjugacy, $\lambda$, $\mu$ are complex numbers. Then $K \leq \beta(T)$. Since $\langle S, T \rangle$ is discrete, we must have $K(1+|bc|) \geq 1$. Hence the equality in the hypothesis implies $K(1+|bc|)=1$. The result now follows from the above corollary. 
\end{proof}

\begin{cor}\label{extsc3}
Let $ S=\begin{pmatrix}a&b\\c&d\end{pmatrix}$ and $T=\begin{pmatrix}{\lambda}&0\\0&{\mu}\end{pmatrix} \in\S$.  If $\langle S, T \rangle$ is discrete, non-elementary and
$$\beta(T) L^k=1,$$
where $k = [1+|bc|] +1 >2$ and $L= 1+|\mu|>1$,
then $T$ is elliptic of order at least seven.
\end{cor}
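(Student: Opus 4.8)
The plan is to reduce this extremal statement to \corref{extc1} by a squeeze argument: I will show that the rigid hypothesis $\beta(T)L^k=1$ forces the sharp equality $K(1+|bc|)=1$ studied there, where throughout I abbreviate
$$K=(\Re\lambda -\Re\mu)^2 +(|\Im \lambda|+|\Im\mu|)^2.$$
No fresh extremal computation is needed, since the factor $L^k$ appearing here dominates the factor $(1+|bc|)$ from the sharp inequality of \thmref{jss}, so forcing the former product down to $1$ propagates the equality to the latter.

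First I would recall the two factorwise estimates established inside the proof of \corref{jss2}. There it is shown, after conjugating $\lambda,\mu$ to complex numbers and using the definition of $\beta(T)$ as a supremum, that $\beta(T)\geq K$; and since $L=1+|\mu|>1$ with $k=[1+|bc|]+1$, that $1+|bc|\leq k\leq L^k$. As $K\geq 0$ and $L^k>0$, these multiply to give
$$\beta(T)\,L^k\;\geq\;K\,L^k\;\geq\;K\,(1+|bc|).$$
On the other hand, discreteness and non-elementarity of $\langle S,T\rangle$ let \thmref{jss} supply $K(1+|bc|)\geq 1$. Concatenating these, I obtain the chain $\beta(T)L^k\geq K(1+|bc|)\geq 1$.

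Next I would invoke the extremal hypothesis. Since $\beta(T)L^k=1$, the chain reads $1\geq K(1+|bc|)\geq 1$, so every inequality in it collapses to an equality; in particular
$$\{(\Re\lambda -\Re\mu)^2 +(|\Im \lambda|+|\Im\mu|)^2\}(1+|bc|)=K(1+|bc|)=1.$$
This is exactly the equality hypothesis of \corref{extc1}, whose conclusion then delivers at once that $T$ is elliptic of order at least seven, finishing the proof.

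The argument is mostly bookkeeping, and the only point demanding care is the orientation of the two factorwise bounds: one must ensure $\beta(T)\geq K$ and $L^k\geq 1+|bc|$ so that $\beta(T)L^k$ sits \emph{above} $K(1+|bc|)$ rather than below it. Both facts were already recorded in the proof of \corref{jss2}, so there is no genuine obstacle here; the substantive work—extracting the order-at-least-seven conclusion from the sharp equality $K(1+|bc|)=1$—was carried out once and for all in \corref{extc1}.
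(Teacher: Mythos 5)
Your proposal is correct and follows essentially the same route as the paper: both arguments combine the bounds $K\leq\beta(T)$ and $1+|bc|\leq L^k$ (recorded in the proof of \corref{jss2}) with the lower bound $K(1+|bc|)\geq 1$ from discreteness to squeeze out the equality $K(1+|bc|)=1$, and then invoke \corref{extc1}. The only cosmetic difference is that you multiply the two factorwise bounds while the paper writes $K\leq\beta(T)=1/L^k\leq 1/(1+|bc|)$; the content is identical.
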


\begin{proof}
Up to conjugacy, we assume $\lambda$, $\mu $ are complex numbers. 
It is enough to show that $K(1+|bc|) =1$, where $K=(\Re\lambda -\Re\mu)^2 +(|\Im \lambda|+|\Im\mu|)^2$. Since the subgroup $\langle S,T \rangle$ generates a discrete non-elementary subgroup of $\S$, then we have $K(1+|bc|) \geq 1$. Now note that
 $$K \leq \beta(T)=\frac{1}{L^{k}} \leq \frac{1}{1+|bc|},$$
This imples, $K(1+|bc|) \leq 1$. Hence, $K(1+|bc|)=1$. The result now follows from \corref{extc1}. 
\end{proof}
The following characterizes non-extreme groups.
\begin{cor}\label{extp1}
Let $ S=\begin{pmatrix}a&b\\c&d\end{pmatrix}$ and $T=\begin{pmatrix}{\lambda}&0\\0&{\mu}\end{pmatrix}$ generate a discrete non-elementary subgroup in $\S$. Suppose
$$||ad|-1|> \big({\cot}^2 \big(\frac{\alpha + \beta}{2}\big) - 3\big).$$
Then $\langle S, T \rangle$ is not an extreme group. \end{cor}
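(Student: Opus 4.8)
The plan is to argue by contraposition: I will show that \emph{if} $\langle S, T\rangle$ is extreme, then necessarily
$$||ad|-1| \le \cot^2\big(\tfrac{\alpha+\beta}{2}\big) - 3,$$
which is precisely the negation of the hypothesis. So I assume $\langle S, T\rangle$ is extreme, meaning equality holds in \thmref{jss}:
$$K(1+|bc|) = 1, \qquad K = (\Re\lambda-\Re\mu)^2 + (|\Im\lambda|+|\Im\mu|)^2.$$

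First I would invoke \corref{extc1}, which forces an extreme $T$ to be elliptic of order at least seven; in particular $|\lambda| = |\mu| = 1$ and, from the argument there, $0 < \alpha+\beta < \tfrac{\pi}{3}$. With $T$ elliptic the computation in the proof of \corref{extc1} reduces $K$ to $2 - 2\cos(\alpha+\beta) = 4\sin^2\big(\tfrac{\alpha+\beta}{2}\big)$. Substituting this into the extremal equation $K(1+|bc|)=1$ and using $\csc^2 = 1 + \cot^2$, I obtain the exact value
$$|bc| = \frac{1}{4\sin^2(\frac{\alpha+\beta}{2})} - 1 = \tfrac14\Big(\cot^2\big(\tfrac{\alpha+\beta}{2}\big) - 3\Big).$$
Since $0 < \alpha+\beta < \tfrac{\pi}{3}$ gives $\cot^2\big(\tfrac{\alpha+\beta}{2}\big) > 3$, this quantity is positive, as it must be.

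The key remaining step, which I expect to be the crux, is to bound $||ad|-1|$ by $|bc|$ purely from the determinant normalization $\det(S) = |ad - aca^{-1}b| = 1$. Writing $X = ad$ and $Y = aca^{-1}b$, multiplicativity of the quaternionic norm gives $|X| = |a||d| = |ad|$ and $|Y| = |a||c||a|^{-1}|b| = |bc|$, while $|X - Y| = 1$. The triangle inequality applied in both directions yields $\big||ad| - |bc|\big| \le 1$ together with $|ad| + |bc| \ge 1$; unwinding these two statements gives exactly $||ad| - 1| \le |bc|$.

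Combining the last two displays, in the extreme case
$$||ad| - 1| \le |bc| = \tfrac14\Big(\cot^2\big(\tfrac{\alpha+\beta}{2}\big) - 3\Big) \le \cot^2\big(\tfrac{\alpha+\beta}{2}\big) - 3,$$
where the final inequality uses positivity of $\cot^2\big(\tfrac{\alpha+\beta}{2}\big) - 3$. This contradicts the hypothesis $||ad|-1| > \cot^2\big(\tfrac{\alpha+\beta}{2}\big) - 3$, so $\langle S, T\rangle$ cannot be extreme. The only genuinely new ingredient is the norm bound $||ad|-1|\le|bc|$ coming from the Dieudonn\'e determinant; everything else is a direct substitution into the expression for $K$ and the extremal relation already established in \corref{extc1}.
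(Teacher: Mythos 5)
Your proof is correct and follows essentially the same route as the paper: extremality gives $|bc|=\frac{1-K}{K}=\frac{1}{4}\big(\cot^2(\frac{\alpha+\beta}{2})-3\big)$, the Dieudonn\'e determinant condition $|ad-aca^{-1}b|=1$ gives $||ad|-1|\le|bc|$ via the two-sided triangle inequality, and the two combine to contradict the hypothesis. Your explicit appeal to \corref{extc1} to justify $|\lambda|=|\mu|=1$ before writing $K=2-2\cos(\alpha+\beta)$ is a small gain in rigor over the paper's proof, which uses that reduction of $K$ without stating the ellipticity of $T$.
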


\begin{proof} If possible suppose $\langle S, T \rangle$ satisfy equality in J\o{}rgensen inequality. Note that, it follows from the equality in J\o{}rgensen inequality that
\begin{equation} \label{bc} |bc|=\frac{1-K}{K}, \end{equation}
where $ K=(\Re\lambda -\Re\mu)^2 +(|\Im \lambda|+|\Im\mu|)^2$ .
The condition $|\sigma|=|ad-aca^{-1}b|=1$ implies,
\begin{eqnarray*}
1 & \leq & |ad|+|bc| \\
\Rightarrow |ad|& \geq & 1-|bc| \\
&=& K(1+|bc|)-|bc| \\
&=& K+ (K-1) \frac{(1-K)}{K}\\
&=&2-\frac{1}{K}.
\end{eqnarray*}
This implies
\begin{equation} \label{ad} |ad|\geq 1-|bc|. \end{equation}
Also we have from $|\sigma|=1$ that $|ad|-|bc|\leq 1$. This implies
$|ad|\leq 1+|bc|$. Combining this with \eqnref{ad} we get
$$||ad|-1|\leq |bc|.$$
Now we see that $K=2(1-\cos (\alpha + \beta))$ and,
\begin{eqnarray*}
|bc|&=& \frac{1-K}{K}= \frac{2\cos (\alpha + \beta) -1}{2-2\cos (\alpha + \beta)}\\
&=& \frac{\cos^2 ({\frac{\alpha + \beta}{2}})-3 \sin^2 (\frac{\alpha + \beta}{2})}{4\sin^2 (\frac{\alpha + \beta}{2})}\\
&=& \frac{\cot^2 (\frac{\alpha + \beta}{2})-3}{4}\\
&\leq& (\cot^2 (\frac{\alpha + \beta}{2}) -3).\end{eqnarray*}
Hence, we have
$$||ad|-1| \leq (\cot^2 (\frac{\alpha + \beta}{2}) -3),$$
which is a contradiction. This proves the result.
\end{proof}

\medskip 

\begin{theorem}\label{extt2}
Suppose $S=\begin{pmatrix} a&b\\ c&d \end{pmatrix}$ and  $T=\begin{pmatrix} \lambda& \eta \\ 0& \mu\end{pmatrix}$,  $\Re \lambda = \Re \mu=\kappa$, $|\lambda|\leq 1\leq |\mu|$,  generate a non-elementary discrete subgroup in $\S$. Suppose,
$$|c| \sqrt{|\tau_0||t_0|} = \frac{1+\sqrt{1- \epsilon^{-1} S(\lambda , \mu)}}{2},$$
where  $S(\lambda , \mu)=|\mu|(|\Im \lambda|+| \Im \mu| )\leq \epsilon$ and, $\epsilon=\frac{1}{4 \sqrt 2}$ or $\frac{1}{4}$ depending on $\kappa \neq 0$ or $\kappa=0$ . We consider the Shimizu-Leutbechar sequence
$$S_0=S, \hspace{.5 cm} S_{n+1}= S_n T S_n^{-1}.$$
   Then, for each $n$, $\langle S_{n} , T \rangle$ is a non-elementary discrete subgroup of $\S$ and
$$|c_n| \sqrt{|\tau_n||t_n|} = \frac{1+\sqrt{1- \epsilon^{-1} S(\lambda , \mu)}}{2}.$$
where $\tau_n =  \lambda (-{c_n}^{-1}d_n)+\eta + ({c_n}^{-1}d_n)\mu ,\;t_n=\lambda (a_n {c_n}^{-1})+\eta - (a_n{c_n}^{-1}) \mu$. \end{theorem}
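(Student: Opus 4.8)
The plan is to run the same ``sandwich'' induction used for \thmref{ext1}, but with the scalar quantity $K(1+|b_nc_n|)$ replaced by $x_n:=|c_n|\sqrt{|\tau_n||t_n|}$. The upper half of the sandwich is the Shimizu--Leutbecher recursion already extracted in the proof of \thmref{jg}, and the lower half is \thmref{jg} itself (or \corref{rez} when $\kappa=0$) applied to the conjugated pair $\langle S_{n+1},T\rangle$. The extremal hypothesis pins $x_0$ at the larger fixed point of the recursion, and the two halves force $x_n$ to stay there.

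I first record notation. Put $\nu=\sqrt2\,S(\lambda,\mu)$ if $\kappa\neq0$ and $\nu=S(\lambda,\mu)$ if $\kappa=0$, so that $4\nu=\epsilon^{-1}S(\lambda,\mu)$ in both cases, and set $f(x)=x^2+\nu$. The equation $f(x)=x$ has roots $x_{\pm}=\tfrac12\bigl(1\pm\sqrt{1-\epsilon^{-1}S(\lambda,\mu)}\bigr)$, so the hypothesis reads $x_0=x_{+}$, and since $S(\lambda,\mu)\le\epsilon$ we have $x_{+}\in[\tfrac12,1]$; in particular $x_{+}>0$ and $f(x_{+})=x_{+}$. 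The computation in the proof of \thmref{jg} (respectively \corref{rez}) yields
$$|\tau_{n+1}c_{n+1}|\le |\tau_nc_n||t_nc_n|+\nu,\qquad |t_{n+1}c_{n+1}|\le |\tau_nc_n||t_nc_n|+\nu.$$
Because $x_n^2=|\tau_nc_n||t_nc_n|$, multiplying these and taking square roots gives the upper bound
$$x_{n+1}\le x_n^2+\nu=f(x_n).$$

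Now I induct, the induction hypothesis being that $\langle S_n,T\rangle$ is non-elementary and discrete and that $x_n=x_{+}$; the case $n=0$ is the hypothesis of the theorem. Since $x_{+}>0$ we have $c_n\neq0$, $\tau_n\neq0$, $t_n\neq0$, and the relation $|c_{n+1}|=|\tau_nc_n||c_n|$ from the proof of \thmref{jg} gives $c_{n+1}\neq0$. Granting (see below) that $\langle S_{n+1},T\rangle$ is again non-elementary, it is discrete as a subgroup of $\langle S,T\rangle$, and the hypotheses on $T$ (namely $\Re\lambda=\Re\mu$, $|\lambda|\le1\le|\mu|$, $S(\lambda,\mu)\le\epsilon$) are unchanged; hence \thmref{jg} (or \corref{rez}) applied to $\langle S_{n+1},T\rangle$ gives the lower bound $x_{n+1}\ge x_{+}$. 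Combined with the upper bound $x_{n+1}\le f(x_n)=f(x_{+})=x_{+}$, this forces $x_{n+1}=x_{+}$, closing the induction and proving both assertions simultaneously.

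The main obstacle is the bracketed non-elementariness claim, since the entire lower bound hinges on being allowed to invoke \thmref{jg} at each stage. Here I would argue from the fixed-point data of $T$. The transformation $T$ fixes $\infty$, and the identities $\tau_n=\bigl(T(-c_n^{-1}d_n)-(-c_n^{-1}d_n)\bigr)\mu$ and $t_n=\bigl(T(a_nc_n^{-1})-a_nc_n^{-1}\bigr)\mu$ (obtained exactly as in the proof of \corref{wat}) show that $\tau_{n+1}\neq0$ and $t_{n+1}\neq0$ mean precisely that $T$ moves both $S_{n+1}^{-1}(\infty)$ and $S_{n+1}(\infty)$; together with $c_{n+1}\neq0$ this says that $S_{n+1}$ carries no fixed point of $T$ onto a fixed point of $T$. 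An elementary discrete group containing the loxodromic or parabolic element $T$ must preserve the fixed-point set of $T$ setwise, which is now impossible, so $\langle S_{n+1},T\rangle$ is non-elementary. This fixed-point bookkeeping is the only genuinely new ingredient beyond the scalar argument of \thmref{ext1}.
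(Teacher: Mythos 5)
Your overall architecture is the right one, and it actually supplies a step that the paper's own proof leaves implicit. The paper only verifies that the auxiliary recursively defined sequence $x_{n+1}=x_n^2+\nu$ stays constant at the larger fixed point and then asserts $c_n\neq 0$ and non-elementariness; it never explicitly records the lower bound $|c_{n+1}|\sqrt{|\tau_{n+1}||t_{n+1}|}\geq x_{+}$ that is needed to upgrade the one-sided estimate to the claimed equality. Your sandwich --- upper bound from the Shimizu--Leutbecher recursion, lower bound from \thmref{jg} (or \corref{rez}) applied to $\langle S_{n+1},T\rangle$ --- is exactly the mechanism the paper uses to prove \thmref{ext1}, transplanted to the upper-triangular case, and it is the correct way to close the argument.

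The one place where your write-up is not yet airtight is the non-elementariness step, and the issue is a circularity: to invoke \thmref{jg} for $\langle S_{n+1},T\rangle$ you need that group to be non-elementary, but your verification uses $\tau_{n+1}\neq0$ and $t_{n+1}\neq0$, which at that stage of the induction are not yet available --- they would follow from $x_{n+1}=x_{+}>0$, which is precisely what you are trying to prove. What you do have a priori is only $c_{n+1}\neq0$. That settles the case of parabolic $T$ (whose only fixed point is $\infty$), but for loxodromic $T$ with second fixed point $p$ you must still exclude the possibility that $S_{n+1}$ \emph{swaps} $p$ and $\infty$, which would give $\tau_{n+1}=t_{n+1}=0$ while $c_{n+1}\neq0$ and would make $\langle S_{n+1},T\rangle$ elementary. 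This is repairable without the missing quantities: $S_{n+1}=S_nTS_n^{-1}$ is itself loxodromic, and a loxodromic transformation cannot interchange two points (if $g(u)=v$ and $g(v)=u$ with $u\neq v$, then $g^2$ fixes both, so $\{u,v\}$ is the fixed-point pair of $g$, which $g$ then fixes pointwise rather than swaps); hence $S_{n+1}$ either fixes both fixed points of $T$ --- excluded by $c_{n+1}\neq0$ --- or fails to preserve the fixed-point set of $T$, giving non-elementariness. With that patch your induction closes. (The residual possibility $|\lambda|=|\mu|=1$ with $T$ elliptic is not covered by the fixed-point-set argument, but the paper glosses over that case as well.)
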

\begin{proof} We prove the result assumeing $\kappa \neq 0$. The case $\kappa=0$ is similar. 

Consider the Shimizu-Leutbecher sequence $S_0 =S, ~S_{n+1}=S_n T {S_n}^{-1}$, where $S_n =\begin{pmatrix}a_n &b_n\\ c_n &d_n\end{pmatrix}$\\{}\\
Now,\begin{eqnarray*}
S_{n+1} &=& S_n T {S_n}^{-1}\\
&=& \begin{pmatrix} a_n&b_n\\c_n &d_n\end{pmatrix} \begin{pmatrix} \lambda & \eta\\0& \mu\end{pmatrix} \begin{pmatrix}d\sptilde_n&-b\sptilde_n\\-c\sptilde_n&a\sptilde_n\end{pmatrix}\\
&=&\begin{pmatrix}a_n\lambda d\sptilde_n- a_n \eta c\sptilde_n-b_n\mu c\sptilde_n&-a_n\lambda b\sptilde_n+ a_n \eta a\sptilde_n +b_n\mu a\sptilde_n\\
c_n\lambda d\sptilde_n-c_n \eta c\sptilde_n - d_n\mu c\sptilde_n&-c_n\lambda b\sptilde_n+ c_n \eta a\sptilde_n+d_n \mu a\sptilde_n\end{pmatrix}
\end{eqnarray*}
Define $\tau_n , t_n$ by
\begin{eqnarray}
 {\tau}_n  &=& {\lambda} (-{c_n}^{-1} d_n) +\eta + ({c_n}^{-1} d_n) \mu \\
 t_n &=& \lambda (a_n {c_n}^{-1}) + \eta - (a_n {c_n}^{-1}) \mu
\end{eqnarray}
We see that \begin{eqnarray*}
c_{n+1} &=& c_n\lambda d\sptilde_n-c_n \eta c\sptilde_n - d_n\mu c\sptilde_n\\ &=& - c_n ( \lambda (-d\sptilde_n{c\sptilde_n}^{-1}) +\eta + ({c_n}^{-1}d_n ) \mu)c\sptilde_n\\
&=& - c_n \tau_n  c\sptilde_n\\
\text{So,}\;|c_{n+1}| &=& |\tau_n c_n| |c_n|
\end{eqnarray*}
In a similar way we can have
\begin{eqnarray*}
|d_{n+1}| &\leq& |\tau_n c_n| |a_n| + 2r\\
|a_{n+1}| &\leq& |\tau_n c_n| |a_n| + \frac{2}{r}\\
\end{eqnarray*}
Also, $|b_{n+1}| \leq |a_n|^2 + r S(\lambda , \mu) |a_n||b_n|$, as in the proof of \thmref{jg}. Also, we have

\begin{eqnarray*}
|\tau_{n+1} c_{n+1}| &\leq& |\tau_n c_n| |t_n c_n| + \sqrt 2 S(\lambda , \mu)\\
|t_{n+1} c_{n+1}| &\leq& |\tau_n c_n| |t_n c_n| + \sqrt 2 S(\lambda , \mu)
\end{eqnarray*}
\\ Considering the sequence
$$ x_0 = |c| \sqrt {|\tau_0| |t_0|}, ~  x_{n+1} = x_n^2 + \sqrt 2 S(\lambda , \mu), ~ \hbox{ where, }S(\lambda , \mu) \leq \frac{1}{4\sqrt 2}.$$
Note that $\{x_n\}$ is a monotonically decreasing sequence of real numbers and is bounded above by $ \frac{1+\sqrt{1-4 \sqrt 2 S(\lambda , \mu)}}{2}$.   By the hypothesis $x_0=\frac{1+\sqrt{1-4 \sqrt 2 S(\lambda, \mu)|}}{2}$. Hence $\{x_n\}$ must be a constant sequence. In particular, $c_n \neq 0$ for all $n$ and hence, $S_n$ and $T$ can not have a common fixed point. Thus $\langle S_n, T \rangle$ is non-elementary.
\end{proof}
\begin{cor}\label{extc2}
Let $S=\begin{pmatrix} a&b\\ c&d \end{pmatrix} ,~T=\begin{pmatrix} \lambda& \eta\\ 0& \mu\end{pmatrix}$,\;where $\Re \lambda =\Re \mu$, $|\lambda|\leq 1\leq |\mu|$, generate a discrete, non-elementary subgroup of $\S$. Suppose
\begin{eqnarray*}{\tau}_0  &=& {\lambda} (-{c}^{-1} d) + \eta + ({c}^{-1} d) \mu,\\
 t_0 &=& \lambda (a {c}^{-1}) + \eta - (a {c}^{-1}) \mu. \end{eqnarray*} If
$$\frac{|\tau_0-t_0|}{|\tau_0 t_0|} >  |\bar c d + a \bar c|,$$
then $\langle S, T \rangle$ is not extreme.
\end{cor}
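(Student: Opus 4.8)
The plan is to argue by contradiction. I assume $\langle S,T\rangle$ \emph{is} extreme, i.e. equality holds in the relevant J\o{}rgensen inequality (\thmref{jg} when $\kappa\neq 0$, \corref{rez} when $\kappa=0$): writing $p=\frac{1+\sqrt{1-\epsilon^{-1}S(\lambda,\mu)}}{2}$ with $\epsilon=\frac{1}{4\sqrt 2}$ or $\frac14$ as in \thmref{extt2}, the extremality hypothesis is $|c|\sqrt{|\tau_0||t_0|}=p$. From this single equality I will derive the reverse of the corollary's hypothesis, namely $\frac{|\tau_0-t_0|}{|\tau_0 t_0|}\leq|\bar c d+a\bar c|$, giving the contradiction. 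Note this needs none of the Shimizu--Leutbecher machinery of \thmref{extt2}; it is a direct algebraic comparison once the defining equality is available.

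First I would record two identities. Writing $w=c^{-1}d+ac^{-1}$ and using $c^{-1}=\bar c/|c|^2$, one has $w=(\bar c d+a\bar c)/|c|^2$, so $|\bar c d+a\bar c|=|c|^2|w|$. Next I would simplify $\tau_0-t_0$. From the definitions, $\tau_0-t_0=-\lambda w+w\mu$; writing $\lambda=\kappa+\Im\lambda$ and $\mu=\kappa+\Im\mu$ and using that the real scalar $\kappa=\Re\lambda=\Re\mu$ is central in $\H$, the two $\kappa w$ terms cancel and leave $\tau_0-t_0=w\,\Im\mu-(\Im\lambda)\,w$. The quaternionic triangle inequality then gives $|\tau_0-t_0|\leq|w|(|\Im\lambda|+|\Im\mu|)$, and since $|\mu|\geq 1$ while $S(\lambda,\mu)=|\mu|(|\Im\lambda|+|\Im\mu|)$, this yields the clean bound $|\tau_0-t_0|\leq|w|\,S(\lambda,\mu)$.

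Then I would feed in the extremal equality, which gives $|\tau_0||t_0|=|\tau_0 t_0|=p^2/|c|^2$, whence
$$\frac{|\tau_0-t_0|}{|\tau_0 t_0|}=\frac{|\tau_0-t_0|\,|c|^2}{p^2}\leq\frac{|w|\,S(\lambda,\mu)\,|c|^2}{p^2}=|\bar c d+a\bar c|\,\frac{S(\lambda,\mu)}{p^2}$$
by the two identities above. It remains to observe that $S(\lambda,\mu)\leq p^2$: since $S(\lambda,\mu)\leq\epsilon\leq\frac14$ by hypothesis, and $p\geq\frac12$ (the square root defining $p$ is real and nonnegative because $\epsilon^{-1}S(\lambda,\mu)\leq 1$), we get $S(\lambda,\mu)\leq\frac14\leq p^2$. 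Therefore $\frac{|\tau_0-t_0|}{|\tau_0 t_0|}\leq|\bar c d+a\bar c|$, contradicting the assumed strict inequality, so $\langle S,T\rangle$ cannot be extreme.

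The main obstacle---indeed the only place demanding care---is the quaternionic bookkeeping in the second step: the cancellation producing $\tau_0-t_0=w\,\Im\mu-(\Im\lambda)\,w$ rests on $\kappa$ being a real, hence central, scalar, and one must resist the temptation to commute $\Im\lambda$ or $\Im\mu$ past $w$, which is illegal in $\H$. Everything else reduces to multiplicativity of the norm and the a priori bound $S(\lambda,\mu)\leq\epsilon$ built into \thmref{jg} and \corref{rez}. Finally I would remark that the argument is uniform across the two cases $\kappa\neq 0$ and $\kappa=0$, since both satisfy $\epsilon\leq\frac14$; only the value of $p$ (through $\epsilon$) changes, and the boundary case $S(\lambda,\mu)=\epsilon=\frac14$ still produces the non-strict inequality $\leq$, which suffices to contradict the strict hypothesis.
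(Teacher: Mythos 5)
Your proposal is correct and follows essentially the same route as the paper's own proof: bound $|\tau_0-t_0|\leq(|\Im\lambda|+|\Im\mu|)\,|c^{-1}d+ac^{-1}|\leq S(\lambda,\mu)\,|c^{-1}d+ac^{-1}|$ using the centrality of $\kappa=\Re\lambda=\Re\mu$, substitute the extremal equality $|c|^2|\tau_0 t_0|=p^2$ (the paper's $\kappa_0^2$), and finish with $S(\lambda,\mu)/p^2\leq 1$ from $S(\lambda,\mu)\leq\epsilon\leq\frac14$ and $p\geq\frac12$. The only cosmetic difference is that the paper inserts the factor $\frac{|c|^2|\tau_0 t_0|}{\kappa_0^2}=1$ inside the chain of inequalities rather than isolating the identity $|\bar c d+a\bar c|=|c|^2|c^{-1}d+ac^{-1}|$ as you do.
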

\begin{proof}
Without loss of generality, assume $|\lambda|\leq 1$. Suppose $\langle S, T \rangle$ is extreme. Then $|c|^2 |\tau_0 t_0|={\kappa_0}^2$.
Note that
$$\tau_0-t_0=-\lambda(c^{-1} d + ac^{-1}) + (c^{-1} d + ac^{-1}) \mu=\Im \lambda(c^{-1} d + ac^{-1})+(c^{-1} d + ac^{-1}) \Im \mu.$$
Thus
\begin{eqnarray*} |\tau_0-t_0| &\leq& (|\Im \lambda|+|\Im \mu|)( |c^{-1}d+ac^{-1}|)\\
& \leq &(|\Im \lambda|+|\Im \mu|) {\frac{1} {|c|^2} } | \bar c d + a \bar c|. \frac{|c|^2|\tau_0 t_0|}{{\kappa_0}^2}.
\end{eqnarray*}
This implies
$$\frac{|\tau_0-t_0|}{|\tau_0 t_0|}\leq S(\lambda, \mu) \frac{|\bar c d + a \bar c|}{{\kappa_0}^2}.$$
Now note that $S(\lambda, \mu) \leq \frac{1}{4 \sqrt 2}<\frac{1}{4}$ and $\kappa_0\geq \frac{1}{2}$, hence $\frac{S(\lambda, \mu)}{\kappa_0^2} \leq 1$. So,
$$\frac{|\tau_0-t_0|}{|\tau_0 t_0|} \leq  |\bar c d + a \bar c|.$$
This proves the result.
\end{proof}

If we choose $T=\begin{pmatrix} \lambda& 0 \\ \eta& \mu\end{pmatrix}$, then  analogous results to \thmref{extt2} and \corref{extc2} follow using similar arguments as above. 

\begin{cor}\label{extc3}
Let $S=\begin{pmatrix} a&b\\ c&d \end{pmatrix} ,~T=\begin{pmatrix} \lambda& 0\\ \eta& \mu\end{pmatrix}$,  $\Re \lambda =\Re \mu$, generate a discrete, non-elementary subgroup of $\S$. Suppose
\begin{eqnarray*}\tau_0 &=&\mu (- b^{-1} a) +\eta +(b^{-1} a)  \lambda ,\\t_0 &=&  \mu (d b^{-1}) +\eta - (d b^{-1})  \lambda.\end{eqnarray*} If
$$\frac{|\tau_0-t_0|}{|\tau_0 t_0|} > |\bar b d + a \bar b|,$$
then $\langle S, T \rangle$ is not extreme.
\end{cor}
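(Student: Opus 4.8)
The plan is to mirror the proof of \corref{extc2}, replacing the upper-triangular data of \thmref{jg} by the lower-triangular data of \thmref{jlt}; the remark preceding the statement already guarantees that the extremal machinery of \thmref{extt2} carries over to $T=\begin{pmatrix}\lambda&0\\\eta&\mu\end{pmatrix}$. First I would argue by contradiction, assuming $\langle S,T\rangle$ is extreme. The lower-triangular analog of \thmref{extt2} then forces equality in the J\o{}rgensen inequality of \thmref{jlt} along the entire Shimizu-Leutbecher sequence; at the zeroth step this reads $|b|^2\,|\tau_0 t_0|=\kappa_0^2$, with $\kappa_0=\tfrac{1+\sqrt{1-\epsilon^{-1}S(\lambda,\mu)}}{2}\ge\tfrac12$. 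Here $b$, the off-diagonal entry whose vanishing would force $S_nTS_n^{-1}$ to share the fixed point $0$ of $T$, plays the role that $c$ played for the $\infty$-fixing $T$ of \corref{extc2}. Equivalently $\tfrac{1}{|b|^2}=\tfrac{|\tau_0 t_0|}{\kappa_0^2}$, which is the identity that converts extremality into the desired estimate.

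The algebraic heart is the computation of $\tau_0-t_0$. Substituting the definitions and collecting the $\lambda$- and $\mu$-terms gives $\tau_0-t_0=-\mu(b^{-1}a+db^{-1})+(b^{-1}a+db^{-1})\lambda$. Since $\Re\lambda=\Re\mu$ is a real scalar, it commutes past the quaternion $b^{-1}a+db^{-1}$ and the two real contributions cancel, leaving only the imaginary parts; by the triangle inequality $|\tau_0-t_0|\le(|\Im\lambda|+|\Im\mu|)\,|b^{-1}a+db^{-1}|$. Writing $b^{-1}=\bar b/|b|^2$ turns $b^{-1}a+db^{-1}$ into $|b|^{-2}(\bar b d+a\bar b)$, so that $|b^{-1}a+db^{-1}|=\tfrac{1}{|b|^2}|\bar b d+a\bar b|$.

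Combining the two steps and using $|\mu|\ge1$ to absorb $|\Im\lambda|+|\Im\mu|\le|\mu|(|\Im\lambda|+|\Im\mu|)=S(\lambda,\mu)$, I get $|\tau_0-t_0|\le S(\lambda,\mu)\,\tfrac{|\tau_0 t_0|}{\kappa_0^2}\,|\bar b d+a\bar b|$. Dividing by $|\tau_0 t_0|$ and invoking $S(\lambda,\mu)\le\tfrac{1}{4\sqrt2}<\tfrac14\le\kappa_0^2$, whence $S(\lambda,\mu)/\kappa_0^2\le1$, yields $\tfrac{|\tau_0-t_0|}{|\tau_0 t_0|}\le|\bar b d+a\bar b|$, which contradicts the hypothesis and proves the corollary.

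I expect the only real obstacle to be bookkeeping rather than new ideas: confirming that the extremal version of \thmref{jlt} genuinely propagates the equality down the constant Shimizu-Leutbecher sequence, so that $\kappa_0\ge\tfrac12$ is legitimately available at $n=0$, and keeping the quaternionic non-commutativity straight in the identity for $\tau_0-t_0$, in particular the correct left/right placement of $a$, $d$, $\bar b$ around $b^{-1}$. Everything else is the identical chain of inequalities used in \corref{extc2}.
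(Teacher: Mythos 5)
Your overall route is exactly the one the paper intends: the paper gives no separate proof of \corref{extc3}, only the remark that it follows from the lower-triangular analogues of \thmref{extt2} and \thmref{jlt} by the same argument as \corref{extc2}, and your write-up is precisely that argument. In particular you correctly identify that $b$ (not the $c$ appearing in the printed statement of \thmref{jlt}) is the entry whose vanishing would give $S$ and $T$ the common fixed point $0$, so that the extremal identity to start from is $|b|^2|\tau_0 t_0|=\kappa_0^2$ with $\kappa_0\geq\tfrac12$; the computation of $\tau_0-t_0$ and the cancellation of the real parts under $\Re\lambda=\Re\mu$ are also right, as is the final use of $S(\lambda,\mu)/\kappa_0^2\leq 1$.

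There is, however, one concrete algebraic step that fails as written: $b^{-1}a+db^{-1}=\frac{1}{|b|^2}(\bar b a+d\bar b)$, \emph{not} $\frac{1}{|b|^2}(\bar b d+a\bar b)$, and over the noncommutative quaternions these two numerators need not have the same norm. For instance, with $a=k$, $d=j$, $b=1+i$ one finds $\bar b d+a\bar b=(1-i)j+k(1-i)=(j-k)+(k-j)=0$, while $\bar b a+d\bar b=(1-i)k+j(1-i)=2j+2k$ has norm $2\sqrt2$. Consequently your chain of inequalities establishes $\frac{|\tau_0-t_0|}{|\tau_0 t_0|}\leq|\bar b a+d\bar b|$ under the extremality assumption, i.e.\ it proves the corollary with $|\bar b a+d\bar b|$ on the right-hand side rather than the printed $|\bar b d+a\bar b|$. (The printed form appears to be pattern-matched from \corref{extc2}, where the relevant sum really is $c^{-1}d+ac^{-1}=\frac{1}{|c|^2}(\bar c d+a\bar c)$; in the lower-triangular case the positions of $a$ and $d$ relative to $b^{-1}$ are reversed, so the conjugated expression changes accordingly.) You should either prove the statement with the corrected quantity $|\bar b a+d\bar b|$ or supply an argument that the two norms coincide in the situation at hand --- they do not coincide for general quaternions, so the identification cannot simply be asserted.
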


\subsection{Examples of Extreme Groups}
Let us consider $~S=\begin{pmatrix}a&0\\c&d\end{pmatrix},~T=\begin{pmatrix}\lambda&c^{-1}j\\0&\mu\end{pmatrix} \in \S$ with $|c|\geq 1$ and $\Im\lambda =\Im\mu=0$. Suppose that the subgroup $\langle S,T \rangle$ in $\S$ is non-elementary and discrete. For eg. if $a=d=c=1$ and  $\lambda=\mu=1$ then this is the case.   Then we see that $\tau_0 = c^{-1}j,~ t_0= c^{-1}j$ and so we have $|c|\sqrt{|\tau_0||t_0|} = 1$,whereas we also observe that $S(\lambda , \mu)=0$ and so we have $\frac{1+\sqrt{1-4\sqrt 2 S(\lambda , \mu)}}{2}= \frac{2}{2}= 1\;.$ So, we have $|c|\sqrt{|\tau_0||t_0|} = 1=\frac{1+\sqrt{1-4\sqrt 2 S(\lambda , \mu)}}{2}$.

\end{document}